\newtheorem{theorem}{Theorem}[section]
\newtheorem{corollary}[theorem]{Corollary}
\theoremstyle{definition}
\newtheorem{definition}[theorem]{Definition}
\newtheorem{proposition}[theorem]{Proposition}
\theoremstyle{remark}
\newtheorem{example}[theorem]{Example}
\begin{document}

\title
{Fuzzy gyronorms on gyrogroups}
\author{Li-Hong Xie}
\address{(L.H. Xie) School of Mathematics and Computational Science, Wuyi University, Jiangmen 529020, P.R. China} \email{xielihong2011@aliyun.com;yunli198282@126.com}

%\author{Ying-Ying Jin}\thanks{}
%\address{(Y.Y. Jin) School of Mathematics and Computational Science, Wuyi University, Jiangmen 529020, P.R. China} \email{yingyjin@163.com}

\thanks{This work is supported by Natural Science Foundation of China (Grant No.11871379, 11526158) and the Innovation Project of Department of Education of Guangdong Province, China.}

%\subjclass[2010]{}

\keywords{Gyrogroups; Fuzzy gyronorms; Fuzzy metrics; Completions of fuzzy metrics; Fuzzy normed gyrogroup}

\begin{abstract}
 The concept of gyrogroups is a generalization of groups which
do not explicitly have associativity. In this paper, the notion of fuzzy gyronorms on gyrogroups is introduced. The relations of fuzzy metrics (in the sense of George and Veeramani), fuzzy gyronorms and gyronorms on gyrogroups are studied. Also, the fuzzy metric structures on fuzzy normed gyrogroups are discussed. In the last, the fuzzy metric completion of a gyrogroup with an invariant metric are studied. We mainly show that let $d$ be an invariant metric on a gyrogroup $G$ and $(\widehat{G},\widehat{d})$ is the metric completion of the metric space $(G,d)$; then for any continuous $t$-norm $\ast$, the standard fuzzy metric space $(\widehat{G},M_{\widehat{d}},\ast)$ of $(\widehat{G},\widehat{d})$ is the (up to isometry) unique fuzzy metric completion of the standard fuzzy metric space $(G,M_d,\ast)$ of $(G,d)$; furthermore, $(\widehat{G},M_{\widehat{d}},\ast)$ is a fuzzy metric gyrogroup containing $(G,M_d,\ast)$ as a dense fuzzy metric subgyrogroup and $M_{\widehat{d}}$ is invariant on $\widehat{G}$. Applying this result, we obtain that every gyrogroup $G$ with an invariant metric $d$ admits an (up to isometric) unique complete metric space $(\widehat{G},\widehat{d})$ of $(G,d)$ such that $\widehat{G}$ with the topology introduced by $\widehat{d}$ is a topology gyrogroup containing $G$ as a dense subgyrogroup and $\widehat{d}$ is invariant on $\widehat{G}$.
\end{abstract}

\maketitle

\section{Introduction}
 Taking as a point of starting the notion of a Menger space, Kramosil and Michalek introduced a notion of metric fuzziness \cite{KM} which became an interesting and fruitful area of research(see for example\cite{GSa,MSa,RaB,RiR}). Furthermore, fuzzy metric spaces have been investigated by several
authors from different points of view (see for example \cite{De,Erc,KS}). In particular, George and Veeramani \cite{GeVe},
by modifying a definition of fuzzy metric space given
by Kramosil and Michalek \cite{KM}, have introduced and studied a new and interesting notion of a fuzzy metric space with the help of
continuous $t$-norms.

Recall that a binary operation $\ast: [0, 1] \times [0, 1] \rightarrow [0, 1]$ is a {\it continuous $t$-norm} \cite{SS}
if $\ast$ satisfies the following conditions:
\begin{enumerate}
\item [(i)] $\ast$ is associative and commutative;
\item [(ii)] $\ast$ is continuous;
\item [(iii)] $a\ast 1=a$ for all $a\in [0,1]$;
\item [(iv)] $a\ast b\leq c\ast d$ whenever $a \leq c$ and $b \leq d$, with $a, b, c, d \in [0, 1]$.
\end{enumerate}

Three paradigmatic examples of continuous $t$-norms are $\wedge$, $\cdot$ and $\ast_L$ (the Lukasiewicz $t$-norm), which are defined by
$a\wedge b = \text{min} \{ a, b \}$, $a \cdot b = ab$ and $a \ast_L b = \text{max} \{ a + b-1, 0 \}$, respectively. One can easily show that $\ast\leq \wedge$ for every continuous $t$-norm $\ast$.

%\begin{definition}(in the sense of Kramosil and Michalek \cite{KM})\label{Def:M}
% A {\it fuzzy metric}  on a set $X$ is a pair $(M, \ast)$ such
% that $M$ is a fuzzy set in $X \times X \times [0,  +\infty)$ and $\ast$ is a continuous $t$-norm satisfying for all $x, y, z \in X$:
% \begin{enumerate}
%\item [(i)] $M(x,y,0)=0$;
%\item [(ii)] $M(x,y,t)=1$ for all $t>0$ if and only if $x=y$;
%\item [(iii)] $M(x, y, t) = M(y, x, t)$ for all $x,y\in X$ and $t > 0$;
%\item [(iv)] $M(x,y,t+s)\geq M(x,z,t)\ast M(z,y,s)$ for all $t,s>0$;
%\item [(v)] $M(x,y,_-):[0,  +\infty)\rightarrow [0,1]$ is continuous.\\
%\end{enumerate}
%\end{definition}

\begin{definition}(in the sense of George and Veeramani \cite{GeVe})\label{Def:M}
 A {\it fuzzy metric}  on a set $X$ is a pair $(M, \ast)$ such
 that $M$ is a fuzzy set in $X \times X \times (0,  +\infty)$ and $\ast$ is a continuous $t$-norm satisfying for all $x, y, z \in X$ and $t,s>0$:
 \begin{enumerate}
\item [(i)] $M(x,y,t)>0$;
\item [(ii)]$M(x,y,t)=1$ if and only if $x=y$;
\item [(iii)] $M(x, y, t) = M(y, x, t)$;
\item [(iv)] $M(x,y,t+s)\geq M(x,z,t)\ast M(z,y,s)$;
\item [(v)] $M(x,y,_-):(0, + \infty)\rightarrow [0,1]$ is continuous.\\
\end{enumerate}
\end{definition}

By a {\it fuzzy metric space} we mean an ordered triple $(X, M, \ast)$ such that $X$ is a set and $(M, \ast)$ is a fuzzy metric on $X$. It is known that every fuzzy metric $(M, \ast)$ on a set $X$ induces a topology $\tau_M$ on $X$, which has as a base the family of open sets of the form $\{B_M(x,\varepsilon,t):x\in X, \varepsilon\in (0,1), t>0\}$, where $B_M(x,\varepsilon,t)=\{y\in X:M(x,y, t)>1-\varepsilon\}$ for all $x\in X$, $\varepsilon\in (0,1)$, $t>0$ (see \cite{GeVe}).

Combinations of a fuzzy metric structure and an algebraic
structure deserve special attention in fuzzy Topological Algebra. The most frequently studied structures fall into the
so-called fuzzy normed spaces (among others, the interested reader can consult \cite{AlR, BaS, ChM, FeF, Lee, AnA}). Also, fuzzy metrics on groups are studied by several scholars (see,\cite{GuR,RoS,Sa2,Sa}). They find some sufficient conditions to make some topological algebraic structures (in particular a nonsymmetric structure) become stronger topological structures (in particular, a symmetric structure). In particular, recently, S\'{a}nchez and Sanchis proved that the completion of a fuzzy metric group $(G,M,\ast)$ such that $(M,\ast)$ is invariant on $G$ is a fuzzy metric group (in the sense of Kramosil and Michalek)\cite[Theorem 2.2]{Sa2}.

In \cite{Ung}, Ungar studies a parametrization of the Lorentz transformation group.
This leads to the formation of gyrogroup theory, a rich subject in mathematics (among others, the interested reader can consult \cite{AbW,Ferr}). Loosely speaking, a gyrogroup (see Definition \ref{Def:gyr}) is a group-like structure in which the associative law fails to satisfy. Recently, topological gyrogroups are studied by Atiponrat \cite{Atip} and Cai et al \cite{Cai} and so on. In particular, Cai et al \cite{Cai} extended the famous Birkhoff-Kakutani theorem by proving that every first-countable Hausdorff topological gyrogroup is metrizable \cite[Theorem 2.3]{Cai}. Recently, Suksumran \cite{Suk} studied the normed gyrogroup. In particular, Suksumran proved that the normed gyrogroups are homogeneous and form left invariant
metric spaces and derive a version of the Mazur-Ulam theorem. Also, Suksumran given
certain sufficient conditions, involving the right-gyrotranslation inequality
and Klee's condition, for a normed gyrogroup to be a topological gyrogroup (see \cite{Suk}).

Those lead to the notion of a fuzzy normed gyrogroup is introduced in this paper. We mainly study the fuzzy metrics structures and the fuzzy metrics completion on fuzzy normed gyrogroups. The paper is organized as follows. In Section \ref{Sec:basic}, some basic facts and definitions are stated. Section \ref{Sec:Fuzzy} is devoted to study the fuzzy normed gyrogroups. The relations of fuzzy gyronorms, fuzzy metrics and gyronorms on gyrogroups are studied. We mainly show that: Every fuzzy normed gyrogroup $G$ has an invariant fuzzy metric under left gyrotranslations on $G$ and every gyrogroup $G$ with an invariant fuzzy metric under left gyrotranslations is a fuzzy normed gyrogroup (see Theorems \ref{The:M}, \ref{THe} and \ref{THM}). Also, some sufficient conditions, which make a fuzzy normed gyrogroup to be a topological gyrogroup, are found (see Theorem \ref{Them:F}). In Section \ref{Sec:Com} we consider the fuzzy metric completion of an invariant metric gyrogroup by proving that let $d$ be an invariant metric on a gyrogroup $G$ and $(\widehat{G},\widehat{d})$ is the metric completion of the metric space $(G,d)$; then for any continuous $t$-norm $\ast$, the standard fuzzy metric space $(\widehat{G},M_{\widehat{d}},\ast)$ of $(\widehat{G},\widehat{d})$ is the (up to isometry) unique fuzzy metric completion of the standard fuzzy metric space $(G,M_d,\ast)$ of $(G,d)$; furthermore, $(\widehat{G},M_{\widehat{d}},\ast)$ is a fuzzy metric gyrogroup containing $(G,M_d,\ast)$ as a dense fuzzy metric subgyrogroup and $M_{\widehat{d}}$ is invariant on $\widehat{G}$ (see Theorem \ref{The:comp}).
 Applying this result, we obtain that every gyrogroup $G$ with an invariant metric $d$ admits an (up to isometric) unique complete metric space $(\widehat{G},\widehat{d})$ of $(G,d)$ such that $\widehat{G}$ with the topology introduced by $\widehat{d}$ is a topology gyrogroup containing $G$ as a dense subgyrogroup and $\widehat{d}$ is invariant on $\widehat{G}$ (see Corollary \ref{CC}).

%%%%%%%%%%%%%%%%%%%%%%%%%%%%%%%%%%%%%%%%%%%%%%%%
%%%%%%%%%%%%%%%%%%%%%%%%%%%%%%%%%%%%%%%%%%%%%%%%
\section{Basic facts and definitions}\label{Sec:basic}
The concept of gyrogroups as a generalization of groups, is originated from the study of $c$-ball of relativistically admissible velocities with Einstein
velocity addition as mentioned by Ungar in \cite{Ung}.

Let $G$ be a nonempty set, and let $\oplus  : G  \times G \rightarrow G $ be a binary operation on $G $. Then the pair $(G, \oplus)$ is
called a {\it groupoid.}  A function $f$ from a groupoid $(G_1, \oplus_1)$ to a groupoid $(G_2, \oplus_2)$ is said to be
a groupoid homomorphism if $f(x_1\oplus_1 x_2)=f(x_1)\oplus_2 f(x_2)$ for any elements $x_1, x_2 \in G_1$.  In addition, a bijective
groupoid homomorphism from a groupoid $(G, \oplus)$ to itself will be called a groupoid automorphism. We will write $\text{Aut~} (G, \oplus)$ for the set of all automorphisms of a groupoid $(G, \oplus)$.
\begin{definition}\cite[Definition 2.7]{Ung}\label{Def:gyr}
 Let $(G, \oplus)$ be a nonempty groupoid. We say that $(G, \oplus)$ or just $G$
(when it is clear from the context) is a gyrogroup if the followings hold:
\begin{enumerate}
\item[($G1$)] There is an identity element $e \in G$ such that
$$e\oplus x=x \text{~~~~~for all~~}x\in G.$$
\item[($G2$)] For each $x \in G $, there exists an {\it inverse element}  $\ominus x \in G$ such that
$$\ominus x\oplus x=e.$$
\item[($G3$)] For any $x, y \in G $, there exists an {\it gyroautomorphism} $\text{gyr}[ x, y ] \in \text{Aut}( G, \oplus)$ such that
$$x\oplus (y\oplus z)=(x\oplus y)\oplus \text{gyr}[ x, y ](z)$$ for all $z \in G$.
\item[($G4$)] For any $x, y \in G$, $\text{gyr}[ x \oplus y, y ] = \text{gyr}[ x, y ]$.
\end{enumerate}
\end{definition}

One can easily show that any gyrogroup has a unique two-sided identity $e$, and an element $a$ of the
gyrogroup has a unique two-sided inverse $\ominus a$. It is clear that every group satisfies
the gyrogroup axioms (the gyroautomorphisms are the identity map) and hence is
a gyrogroup. Conversely, any gyrogroup with trivial gyroautomorphisms forms a
group. From this point of view, gyrogroups naturally generalize groups.

Proposition \ref{Pro:gyr} summarizes some algebraic properties of gyrogroups, which will prove useful in studying topological and geometric aspects of gyrogroups in Sections \ref{Sec:Fuzzy} and \ref{Sec:Com}.
\begin{proposition}(\cite{Su1,Su})\label{Pro:gyr}
Let $(G,\oplus)$ be a gyrogroup and $a,b,c\in G$. Then
\begin{enumerate}
\item[(1)] $\ominus(\ominus a)=a$ \hfill{Involution of inversion}
\item[(2)] $\ominus a\oplus(a\oplus b)=b$ \hfill{Left cancellation law}
\item[(3)] \text{gyr}$[a,b](c)=\ominus(a\oplus b)\oplus(a\oplus(b\oplus c))$ \hfill{Gyrator identity}
\item[(4)] $\ominus(a\oplus b)=\text{gyr}[a,b](\ominus b\ominus a)$\hfill{\text{cf.~}$(ab)^{-1}=b^{-1}a^{-1}$}
\item[(5)] $(\ominus a\oplus b)\oplus \text{gyr}[\ominus a,b](\ominus b\oplus c)=\ominus a\oplus c$ \hfill{\text{cf.~}$(a^{-1}b)(b^{-1}c)=a^{-1}c$}
\item[(6)] $\text{gyr}[a,b]=\text{gyr}[\ominus b,\ominus a]$ \hfill{Even property}
\item[(7)] $\text{gyr}[a,b]=\text{gyr}^{-1}[b,a], \text{the inverse of gyr}[b,a]$ \hfill{Inversive symmetry}
\end{enumerate}
\end{proposition}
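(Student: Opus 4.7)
The plan is to derive all seven identities from the gyrogroup axioms (G1)--(G4) stated in Definition~\ref{Def:gyr}, in an ordered sequence so that later items may build on earlier ones. Two preliminary facts are standardly established alongside the definition and will be needed throughout: that the left identity $e$ is also a right identity, and that every left inverse $\ominus a$ is also a right inverse. Both follow by manipulating $\ominus(\ominus a)\oplus(\ominus a\oplus a)$ via the left gyroassociative law (G3) and using the fact that each $\text{gyr}[x,y]$ is a bijection; uniqueness of two-sided inverses then forces $a\oplus\ominus a=e$ and $a\oplus e=a$.

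With two-sided inverses available, item (1) is immediate: $\ominus(\ominus a)$ is the left inverse of $\ominus a$ and $a$ is a right inverse of $\ominus a$, so uniqueness gives $\ominus(\ominus a)=a$. For (2), one rewrites $\ominus a\oplus(a\oplus b)$ via (G3) as $(\ominus a\oplus a)\oplus\text{gyr}[\ominus a,a](b)=\text{gyr}[\ominus a,a](b)$; the key subclaim is that $\text{gyr}[\ominus a,a]=\text{id}$, which follows from the loop property (G4) applied to $(\ominus a,a)$ together with the observation that $\text{gyr}[e,a]$ is the identity automorphism (a direct consequence of (G3) with first argument $e$). Item (3) is obtained by left-adding $\ominus(a\oplus b)$ to the gyroassociative identity $a\oplus(b\oplus c)=(a\oplus b)\oplus\text{gyr}[a,b](c)$ and applying the left cancellation law (2).

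For (4), compute $\text{gyr}[a,b](\ominus b\ominus a)$ using (3): the expression $a\oplus(b\oplus(\ominus b\ominus a))$ collapses to $a\oplus\ominus a=e$ by repeated use of (2), leaving $\text{gyr}[a,b](\ominus b\ominus a)=\ominus(a\oplus b)$. For (5), apply (G3) in reverse to rewrite $\ominus a\oplus(b\oplus(\ominus b\oplus c))$ as $(\ominus a\oplus b)\oplus\text{gyr}[\ominus a,b](\ominus b\oplus c)$, then reduce the left-hand side by (2) twice. The identities (6) and (7) are the most delicate; the standard strategy is to use the loop property (G4) together with (3)--(5) to show that $\text{gyr}[b,a]\circ\text{gyr}[a,b]$ acts as the identity on an arbitrary $c\in G$, yielding (7), and then to derive (6) by the substitution $(a,b)\mapsto(\ominus b,\ominus a)$ combined with (7) and item (1).

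The main obstacle will be items (6) and (7): no single manipulation suffices, and one must chain several substitutions blending (G3), (G4), and the already-established identities. Specifically, verifying $\text{gyr}[a,b]\circ\text{gyr}[b,a]=\text{id}$ requires applying the gyroassociative law to the triple $(a,b,c)$ in two different bracketings and matching the resulting gyrations using the loop property; this is the only place where the full force of (G4) beyond its immediate consequences is needed.
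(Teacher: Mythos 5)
The paper offers no proof of this proposition at all---it is imported from Suksumran and Ungar \cite{Su1,Su}---so there is no in-text argument to compare yours against; what matters is whether your reconstruction from the axioms is sound. For items (1)--(5) it essentially is: general left cancellation from injectivity of gyrations, $\text{gyr}[e,a]=\mathrm{id}$ from (G3) with first argument $e$, then $\text{gyr}[\ominus a,a]=\text{gyr}[\ominus a\oplus a,a]=\text{gyr}[e,a]=\mathrm{id}$ from (G4), and items (2)--(5) by the computations you indicate is exactly the standard route. One slip in your preliminaries: you cannot invoke ``uniqueness of two-sided inverses'' to prove $a\oplus\ominus a=e$, since until you know a left inverse is also a right inverse there are no two-sided inverses to be unique; the correct order is to compute $x\oplus(a\oplus x)=(x\oplus a)\oplus\text{gyr}[x,a](x)=x=x\oplus e$ for a left inverse $x$ of $a$ and cancel. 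This is repairable and does not affect the rest.

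The genuine gap is item (6). The even property does not follow from (7) by substitution: putting $(a,b)\mapsto(\ominus b,\ominus a)$ into (7) yields $\text{gyr}[\ominus b,\ominus a]=\text{gyr}^{-1}[\ominus a,\ominus b]$, and to identify the right-hand side with $\text{gyr}^{-1}[b,a]=\text{gyr}[a,b]$ you would need $\text{gyr}[\ominus a,\ominus b]=\text{gyr}[b,a]$---which is the even property again, so the argument is circular. In Ungar's development the even property is a separate theorem whose proof leans on the gyrosum inversion law (4) and the loop property; that argument must be supplied. You should also note that (6) as printed, $\text{gyr}[a,b]=\text{gyr}[\ominus b,\ominus a]$, is itself a misprint for $\text{gyr}[a,b]=\text{gyr}[\ominus a,\ominus b]$ (the form actually invoked in the proof of Theorem \ref{The:Klee}): in the M\"{o}bius gyrogroup $\text{gyr}[a,b]$ is multiplication by $(1+a\overline{b})/(1+\overline{a}b)$ while $\text{gyr}[\ominus b,\ominus a]$ is multiplication by its reciprocal $(1+\overline{a}b)/(1+a\overline{b})$, so the printed identity fails and no substitution trick can establish it. Finally, for (7) your target ($\text{gyr}[b,a]\circ\text{gyr}[a,b]=\mathrm{id}$ pointwise) is the correct one, but ``two bracketings plus the loop property'' is not yet a proof; the known arguments chain (3)--(5) with the gyrosum inversion law in a specific order, and you need to write that chain out before the claim can be accepted.
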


As far as we known, Atiponrat is the first scholar who extended the idea of topological groups to topological gyrogroups as gyrogroups with a topology such that
its binary operation is jointly continuous and the operation of taking the inverse is continuous.

\begin{definition}\cite[Definition 1]{Atip}
A triple $( G, \tau,  \oplus)$ is called a {\it topological gyrogroup} if and only if
\begin{enumerate}
\item[(1)] $( G, \tau)$ is a topological space;
\item[(2)] $( G, \oplus)$ is a gyrogroup; and
\item[(3)] The binary operation  $\oplus: G  \times G  \rightarrow G$ is continuous where $G \times G$ is endowed with the product topology
and the operation of taking the inverse $\ominus(\cdot ) : G  \rightarrow G $, i.e. $x \rightarrow \ominus x$, is continuous.
\end{enumerate}
\end{definition}

If a triple $( G, \tau,  \oplus)$ satisfies the first two conditions and its binary operation is continuous, we call such
triple a {\it paratopological gyrogroup} \cite{Atip1}. Sometimes we will just say that $G$ is a topological gyrogroup (paratopological gyrogroup) if the binary operation and the topology are clear from the context.

Clearly, every topological group is a topological gyrogroup. Hence, tt is natural to ask for the existence of a topological gyrogroup which is not a topological gyrogroup. In fact, Atiponrat show that the M\"{o}bius gyrogroup and the Einstein gyrogroups with the standard topology are such examples \cite[Examples 2 and 3]{Atip}. For the sake of completeness, we give one of Examples as follows.

\begin{example}\cite[Example 2]{Atip}
 Let $\mathbb{D}$ be the complex open unit disk $\{z\in \mathbb{C}:|z|<1\}$. Consider $\mathbb{D}$ with the standard topology. Next, we define a M\"{o}bius addition $\oplus_{\text{M}}:\mathbb{D}\times \mathbb{D}\rightarrow \mathbb{D}$ to be a function such that
 $$a\oplus_{\text{M}} b=\frac{a+b}{1+\overline{a}b}\quad\quad\quad\quad\quad\text{for all~}a,b\in\mathbb{D}.$$
 Then $\mathbb{D}$ with the operator $\oplus_{\text{M}}$ is not a group, which has no associativity. However, it has been proved in section 3.4 of \cite{Ung} that $(\mathbb{D}, \oplus_{\text{M}})$ is a gyrogroup where the gyroautomorphism define as follows: for any $a,b,c\in \mathbb{D}$
 $$\text{gyr}[a,b](c)=\frac{1+a\overline{b}}{1+\overline{a}b}c.$$

 This gyrogroup is one of the most important examples of gyrogroups. It is called the {\it M\"{o}bius gyrogroup}.
 Moreover, $0$ is the identity, and for any $a\in \mathbb{D}$, we get that $-a\in \mathbb{D}$ such that $-a \oplus_{\text{M}} a=0$. Furthermore, $\mathbb{D}$ with the standard topology, the operator $\oplus_{\text{M}}$ and the inverse operator are continuous, so $(\mathbb{D}, \oplus_{\text{M}})$ is a topology gyrogroup, but not a topological group or or a paratopological group.
\end{example}

%%%%%%%%%%%%%%%%%%%%%%%%%%%%%%%%%%%%%%%%%%%%%%%%
%%%%%%%%%%%%%%%%%%%%%%%%%%%%%%%%%%%%%%%%%%%%%%%%
\section{Fuzzy normed gyrogroups}\label{Sec:Fuzzy}

In this section we shall introduced the notions of fuzzy gyronorm on gyrogroups. Also, the fuzzy metric structure and geometric structures of fuzzy normed gyrogroups are studied. Let us begin with the following definition.

\begin{definition}\cite[Definition 3.1]{Suk}\label{Def:No}
({\bf Gyronorms}). Let $(G,\oplus)$ be a gyrogroup. A function $\parallel\cdot\parallel: G\rightarrow\mathbb{R}$ is
called a gyronorm on $G$ if the following properties hold:
\begin{enumerate}
\item $\parallel x\parallel\geq 0$ \text{~for each~}$x\in G$ \text{~and~}$\parallel x\parallel= 0$ \text{~ if and only if~} $x=e$;\hfill{\text{(positivity)}}
\item $\parallel \ominus x\parallel= \parallel x\parallel$ \text{~for each~}$x\in G$; \hfill{\text{(invariant under taking inverses)}}
\item $\parallel x\oplus y\parallel\leq\parallel x\parallel+\parallel y\parallel$; \hfill{\text{~for each~}$x,y\in G $ \text{(subadditivity)}}
\item $\parallel\text{gyr}[a,b](x)\parallel=\parallel x\parallel$ \text{~for each~}$x,a,b\in G$. \hfill{\text{(invariant under gyrations)}}
\end{enumerate}
\end{definition}

In 2018, Suksumran \cite{Suk} introduced the notions of gyronorms on gyrogroups and said that any gyrogroup with a gyronorm is called a {\it normed gyrogroup}. Also, some intersting metric and geometric structures of normed gyrogroups are established in \cite{Suk}. This leads us to introduce the notions of Fuzzy Gyronorms on gyrogroup as follows.

\begin{definition} ({\bf Fuzzy Gyronorms})\label{Def:N}
  Given a gyrogroup $(G,\oplus)$ and a continuous $t$-norm $\ast$, the pair $(N,\ast)$ is called a {\it fuzzy gyronorms} on $G$ if $N$ is a fuzzy set of $G\times (0,+\infty)$ satisfying the following conditions for all $x, y \in G$ and all $t,s\in (0,+\infty)$:
 \begin{enumerate}
 \item[($N1$)] $N(x,t)>0$;
 \item[($N2$)] $x=e$ if and only if $N(x,t)=1$;
 \item[($N3$)] $N(\ominus x,t)=N(x,t)$;
 \item[($N4$)] $N(x\oplus y,t+s)\geq N(x,t)\ast N(y,s);$
 \item[($N5$)] $N(x,_{-}):(0,+\infty)\rightarrow [0,1]$ is continuous;
 \item[($N6$)] $N(\text{gyr}[a,b])(x),t)=N(x,t)$ for all $a,b\in G$.
 \end{enumerate}
\end{definition}

 Given a gyrogroup $G$, a continuous $t$-norm $\ast$  and a fuzzy set $N$ of $G\times (0,+\infty)$, the ordered triple $(G,N,\ast)$ is called a {\it fuzzy normed} gyrogroup if $(N,\ast)$ is a fuzzy gyronorm on $G$.

 The relations of normed gyrogroups and fuzzy normed gyrogroups are shown as follows.

\begin{proposition}\label{PO}
 Let $(G,\parallel\cdot\parallel)$ be a normed gyrogroup and define a fuzzy set $N_{\parallel\cdot\parallel}$ of $G\times (0,+\infty)$ by $N_{\parallel\cdot\parallel}(x,t)=\frac{t}{t+\parallel x \parallel}$ for each $x\in G$ and $t>0$, then the ordered triple $(G,N_{\parallel\cdot\parallel},\ast)$ is a fuzzy normed gyrogroup, where $\ast$ is a contnuous $t$-norm;
\end{proposition}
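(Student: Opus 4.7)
The plan is to check the six conditions (N1)--(N6) of Definition \ref{Def:N} one by one for $N_{\|\cdot\|}(x,t) = t/(t+\|x\|)$, transcribing the four axioms of a gyronorm from Definition \ref{Def:No}. Conditions (N1), (N2), (N3), (N5), (N6) should each follow almost immediately: (N1) holds because $t>0$ and $\|x\|\ge 0$ force $t+\|x\|>0$; (N2) is a direct rewriting of $\|x\|=0\Leftrightarrow x=e$ (positivity of the gyronorm); (N3) rests on the invariance $\|\ominus x\|=\|x\|$; (N5) is the continuity of the smooth map $t\mapsto t/(t+\|x\|)$ on $(0,+\infty)$; and (N6) is an immediate consequence of gyroinvariance, $\|\mathrm{gyr}[a,b](x)\|=\|x\|$.

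The only substantive step is (N4), namely $N_{\|\cdot\|}(x\oplus y, t+s)\ge N_{\|\cdot\|}(x,t)\ast N_{\|\cdot\|}(y,s)$. Here I plan to exploit the universal bound $\ast\le \wedge$ noted in the introduction, which reduces the task to proving
\[
\frac{t+s}{t+s+\|x\oplus y\|}\;\ge\;\min\!\left\{\frac{t}{t+\|x\|},\ \frac{s}{s+\|y\|}\right\}.
\]
The subadditivity axiom of the gyronorm gives $\|x\oplus y\|\le \|x\|+\|y\|$, so the left-hand side is at least $(t+s)/(t+s+\|x\|+\|y\|)$. It then remains to verify the elementary inequality
\[
\frac{t+s}{t+s+a+b}\ \ge\ \min\!\left\{\frac{t}{t+a},\ \frac{s}{s+b}\right\}\qquad (a,b\ge 0,\ t,s>0),
\]
which I would establish by a two-case split: assuming without loss of generality $t/(t+a)\le s/(s+b)$ (equivalently $tb\le sa$), clearing denominators reduces the desired bound $(t+s)/(t+s+a+b)\ge t/(t+a)$ to $(t+s)(t+a)-t(t+s+a+b)=sa-tb\ge 0$, which holds by hypothesis; the other case is symmetric.

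The main (and only) obstacle is packaging (N4) so that it works for an \emph{arbitrary} continuous $t$-norm $\ast$, not merely for $\wedge$. This is handled uniformly by the elementary fact $\ast\le \wedge$, so no case analysis on the particular $t$-norm is needed. All other axioms are bookkeeping, so once the algebraic inequality above is in hand the proposition is proved.
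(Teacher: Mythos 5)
Your proposal is correct and follows essentially the same route as the paper: both dispose of (N1)--(N3), (N5), (N6) as direct transcriptions of the gyronorm axioms, and both handle (N4) by combining subadditivity with the bound $\ast\le\wedge$ and then verifying the elementary inequality $\frac{t+s}{t+s+a+b}\ge\min\{\frac{t}{t+a},\frac{s}{s+b}\}$ via a WLOG case split and clearing denominators. The algebra in your reduction ($sa-tb\ge 0$ under the hypothesis $tb\le sa$) checks out and matches the paper's chain of equivalences.
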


\begin{proof}
Now let us check $(N_{\parallel\cdot\parallel},\ast)$ satisfies the conditions in Definition \ref{Def:N}. In fact, it is enough to show that $N_{\parallel\cdot\parallel}$ satisfies Definition \ref{Def:N} ($N4$), since clearly other conditions hold for $N_{\parallel\cdot\parallel}$ by Definition \ref{Def:No}.

In fact, take any $x,y\in G$ and $t,s>0$. Then $$N_{\parallel\cdot\parallel}(x\oplus y,t+s)=\frac{s+t}{s+t+\parallel x\oplus y\parallel}\geq \frac{s+t}{s+t+\parallel x\parallel + \parallel y\parallel},$$
since $\parallel x\oplus y\parallel \leq \parallel x\parallel + \parallel y\parallel$ by Definition \ref{Def:No} (3). Thus, to show that
$$N_{\parallel\cdot\parallel}(x\oplus y,t+s)\geq N_{\parallel\cdot\parallel}(x,t)\ast N_{\parallel\cdot\parallel}(y,s)=\frac{t}{t+\parallel x \parallel}\ast \frac{s}{s+\parallel y \parallel},$$
it is enough to show that $$\frac{s+t}{s+t+\parallel x\parallel + \parallel y\parallel}\geq \text{min~}\{\frac{t}{t+\parallel x \parallel},\frac{s}{s+\parallel y\parallel}\},$$
since $\ast$ is a continuous $t$-norm.

Without loss of generality, we assume that $$\frac{t}{t+\parallel x \parallel}\geq\frac{s}{s+\parallel y \parallel},$$
which is equivalent to $$\frac{\parallel x\parallel}{t}\leq \frac{\parallel y\parallel}{s}.$$
Hence, we have that
\begin{align*}
&\frac{s+t}{s+t+\parallel x\parallel + \parallel y\parallel}\geq \text{min~}\{\frac{t}{t+\parallel x \parallel},\frac{s}{s+\parallel y \parallel}\}
\\&\Leftrightarrow \frac{s+t}{s+t+\parallel x\parallel + \parallel y\parallel}\geq \frac{s}{s+\parallel y \parallel}
\\&\Leftrightarrow \frac{s+t+\parallel x\parallel + \parallel y\parallel}{s+t}\leq \frac{s+\parallel y \parallel}{s}
\\&\Leftrightarrow \frac{\parallel x\parallel + \parallel y\parallel}{s+t}\leq \frac{\parallel y \parallel}{s}
\\&\Leftrightarrow s\parallel x\parallel+s\parallel y\parallel\leq s\parallel y\parallel+t\parallel y\parallel
\\&\Leftrightarrow  s\parallel x\parallel\leq t\parallel y\parallel
\\&\Leftrightarrow \frac{\parallel x\parallel}{t}\leq \frac{\parallel y\parallel}{s}.
\end{align*}
Hence, by the assumption, we have proved that $$N_{\parallel\cdot\parallel}(x\oplus y,t+s)\geq N_{\parallel\cdot\parallel}(x,t)\ast N_{\parallel\cdot\parallel}(y,s)$$
holds for each $x,y\in G$ and $s,t>0$.
\end{proof}

Proposition \ref{PO} shows that every normed gyrogroup can introduce a fuzzy normed gyrogroup, for example, on the Einstein gyrogroups and the M\"{o}biusg yrogroups there are fuzzy gyronorms, since they are normed gyrogroups (see \cite[Theorems 3.3 and 3.4]{Suk}).
Theorems \ref{The:M} and \ref{THM} reveals the relations of fuzzy metrics and fuzzy gyronorms on gyrogroups.
\begin{theorem}\label{The:M}
 Let $(G,N,\ast)$ be a fuzzy normed gyrogroup. Define
 $$M_N(x,y,t)=N(\ominus x\oplus y,t)$$
 for all $x,y\in G$ and $t>0$. Then $(M,\ast)$ is a fuzzy metric on $G$.
\end{theorem}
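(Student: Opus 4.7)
The plan is to verify the five axioms of Definition \ref{Def:M} one by one for $M_N(x,y,t) = N(\ominus x \oplus y, t)$, leveraging the corresponding fuzzy gyronorm axioms in Definition \ref{Def:N} together with the algebraic identities from Proposition \ref{Pro:gyr}. Conditions (i) and (v) fall out immediately: positivity follows from ($N1$) applied to the element $\ominus x \oplus y$, and the continuity of $t \mapsto M_N(x,y,t)$ is precisely ($N5$) applied to the same element. For condition (ii), I would use that gyrogroups have a two-sided identity and a two-sided inverse: if $\ominus x \oplus y = e$, then left-cancellation (Proposition \ref{Pro:gyr}(2)) applied to $x \oplus (\ominus x \oplus y) = y$ together with $x\oplus e = x$ gives $y = x$; conversely $\ominus x \oplus x = e$, so $M_N(x,x,t) = N(e,t) = 1$ by ($N2$).

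The interesting work lies in the symmetry (iii) and the triangle-type inequality (iv), and these are where the invariance under gyrations ($N6$) does the real job. For symmetry, I would compute the inverse $\ominus(\ominus x \oplus y)$ using Proposition \ref{Pro:gyr}(4), which yields $\text{gyr}[\ominus x, y](\ominus y \oplus x)$; applying ($N3$) and then ($N6$) converts $N(\ominus x \oplus y, t)$ into $N(\ominus y \oplus x, t) = M_N(y,x,t)$. The triangle inequality is handled by the key identity Proposition \ref{Pro:gyr}(5), applied with $a = x$, $b = z$, $c = y$:
\begin{equation*}
(\ominus x \oplus z) \oplus \text{gyr}[\ominus x, z](\ominus z \oplus y) = \ominus x \oplus y.
\end{equation*}
Substituting this into $N(\ominus x \oplus y, t+s)$, applying ($N4$) to split across the outer $\oplus$, and finally using ($N6$) to strip off the gyroautomorphism gives
\begin{equation*}
M_N(x,y,t+s) \geq N(\ominus x \oplus z, t) \ast N(\ominus z \oplus y, s) = M_N(x,z,t) \ast M_N(z,y,s).
\end{equation*}

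The main obstacle, though it is more conceptual than technical, is locating the right gyrogroup identity that mimics the familiar group computation $(\ominus x \oplus z) \oplus (\ominus z \oplus y) = \ominus x \oplus y$: in a gyrogroup this fails because of the intervening gyration, and Proposition \ref{Pro:gyr}(5) is precisely the correction term. Once one notices that the correction is exactly a gyroautomorphism and that ($N6$) was designed to absorb such factors without changing the value of $N$, all five axioms line up cleanly, with axiom ($N6$) also being indispensable in the symmetry step for the same reason.
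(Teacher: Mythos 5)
Your proposal is correct and follows essentially the same route as the paper's proof: axioms (i), (ii), (v) from ($N1$), ($N2$), ($N5$) with left cancellation, symmetry via Proposition \ref{Pro:gyr}(4) combined with ($N3$) and ($N6$), and the triangle inequality via Proposition \ref{Pro:gyr}(5) followed by ($N4$) and ($N6$); the only difference is an immaterial relabelling of the intermediate point in the triangle step.
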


\begin{proof}
Suppose that $(G,N,\ast)$ is a fuzzy normed gyrogroup. Let us verify that the pair $(M_N,\ast)$ satisfies (i)-(v) in Definition \ref{Def:M}.

(i): According to the definition of $M_N$, $M_N(x,y,t)=N(\ominus x\oplus y,t)$, so $M_N(x,y,t)>0$ by Definition \ref{Def:N}(N1) $N(\ominus x\oplus y,t)>0$.

(ii): If $x=y$, then $M_N(x,y,t)=N(\ominus x\oplus y,t)=N(e,t)=1$ for all $t>0$ by Definition \ref{Def:N}$(N2)$. If $M_N(x,y,t)=1$  for all $t>0$, then $N(\ominus x\oplus y,t)=1$ for all $t>0$. Thus, by Definition \ref{Def:N}$(N2)$ we obtain that $\ominus x\oplus y=e$, so $x=y$ by by the left cancellation law.

(iii): By  Proposition \ref{Pro:gyr} (4) and Definition \ref{Def:N} ($N3$) and ($N6$) we obtain that
\begin{align*}
M_N(x,y,t)&=N(\ominus x\oplus y,t)
\\&=N(\ominus(\ominus x\oplus y),t)
\\&=N(\text{gyr}[\ominus x,y](\ominus y\oplus x),t)
\\&=N(\ominus y\oplus x),t)
\\&=M_N(y,x, t).
\end{align*}

(iv): Take any $x,y,z\in G$. Then Proposition \ref{Pro:gyr} (5) and Definition \ref{Def:N} ($N4$), ($N6$) we obtain that

\begin{align*}
M_N(x,z,s+t)&=N(\ominus x\oplus z,s+t)
\\&=N((\ominus x\oplus y)\oplus \text{gyr}[\ominus x,y](\ominus y\oplus z),s+t)
\\&\geq N(\ominus x\oplus y,s)\ast N(\text{gyr}[\ominus x,y](\ominus y\oplus z),t)
\\&=N(\ominus x\oplus y,s)\ast N(\ominus y\oplus z,t)
\\&=M_N(x,y,s)\ast M(y,z,t).
\end{align*}

(v): It is obvious by Definition \ref{Def:N} ($N5$).
\end{proof}

 The fuzzy metric induced by a fuzzy gyronorm in Theorem \ref{The:M} is called a {\it fuzzy gyronorm metric} on $G$.

 Let $(X, M, \ast )$ and $(Y, N, \star )$ be two fuzzy metric spaces. Recall that a mapping from $X$ to $Y$  is called
an {\it isometry} if for each $x, y \in X$ and each $t>0$, $M(x,y,t)=N(f(x),f(y),t)$. Two fuzzy metric spaces $(X, M, \ast )$ and $(Y, N, \star )$ are called {\it isometric} if there is an isometry from $X$ onto $Y$ (see \cite[Definitions 1 and 2]{GrSR}).

\begin{theorem}\label{THe}
Let $(G,N,\ast)$ be a fuzzy normed gyrogroup. Then the fuzzy gyronorm metric $(M_N,\ast)$ with respect to $(N,\ast)$ is invariant under left gyrotranslation:
$$M_N(a\oplus x,a \oplus y,t)=M_N(x,y,t)$$
for all $x,y,a\in G$ and $t\in (0,+\infty)$.  Hence, every left gyrotranslation of $G$ is an isometry of $(G,M_N,\ast)$.
\end{theorem}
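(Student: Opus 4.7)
The plan is to reduce the invariance equation to a purely algebraic identity in the gyrogroup, and then dispatch it with axiom $(N6)$. Unpacking the definition of the fuzzy gyronorm metric gives
\[
M_N(a\oplus x,\,a\oplus y,\,t)=N\bigl(\ominus(a\oplus x)\oplus(a\oplus y),\,t\bigr),
\]
so the whole statement boils down to showing that
\[
\ominus(a\oplus x)\oplus(a\oplus y)=\mathrm{gyr}[a,x](\ominus x\oplus y),
\]
because once this identity is in hand, $(N6)$ immediately yields
\[
N\bigl(\mathrm{gyr}[a,x](\ominus x\oplus y),\,t\bigr)=N(\ominus x\oplus y,\,t)=M_N(x,y,t).
\]

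To prove the bracketed identity, I would use the left gyroassociative law $(G3)$ together with the left cancellation law (Proposition~\ref{Pro:gyr}(2)). Writing $y=x\oplus(\ominus x\oplus y)$ via left cancellation and then applying $(G3)$ with $z=\ominus x\oplus y$ gives
\[
a\oplus y=a\oplus\bigl(x\oplus(\ominus x\oplus y)\bigr)=(a\oplus x)\oplus\mathrm{gyr}[a,x](\ominus x\oplus y).
\]
Left-cancelling $a\oplus x$ on both sides produces exactly the desired identity. This is the only nontrivial computation in the argument, and it is really just the standard derivation of the gyrotranslation formula.

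For the final sentence, I would observe that the left gyrotranslation $L_a\colon G\to G$, $x\mapsto a\oplus x$, is a bijection with inverse $L_{\ominus a}$ (a direct consequence of the left cancellation law), and the equation just proved is exactly the isometry condition for $L_a$ with respect to the fuzzy metric $(M_N,\ast)$ in the sense of the isometry definition recalled before the theorem.

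The main (and only) obstacle is spotting the correct gyrogroup identity to rewrite $\ominus(a\oplus x)\oplus(a\oplus y)$; once $(G3)$ plus left cancellation is identified as the right tool, the remaining verification is a one-line application of $(N6)$. No use of subadditivity $(N4)$, of $\ast$, or of continuity $(N5)$ is needed — the result is essentially algebraic, reflecting the fact that gyrations preserve the fuzzy gyronorm.
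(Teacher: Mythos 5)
Your proof is correct, but it reaches the conclusion by a different algebraic decomposition than the paper. The paper rewrites $\ominus(a\oplus x)\oplus(a\oplus y)$ by first expanding $\ominus(a\oplus x)=\mathrm{gyr}[a,x](\ominus x\ominus a)$ (Proposition~\ref{Pro:gyr}(4)), then uses that gyrations are automorphisms together with inversive symmetry and $(N6)$ to pull the gyration out of the whole expression, and finally invokes the identity $(\ominus a\oplus b)\oplus \mathrm{gyr}[\ominus a,b](\ominus b\oplus c)=\ominus a\oplus c$ (Proposition~\ref{Pro:gyr}(5)) to collapse what remains to $\ominus x\oplus y$ --- so $(N6)$ is applied to an intermediate expression rather than at the end. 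You instead prove the single clean identity $\ominus(a\oplus x)\oplus(a\oplus y)=\mathrm{gyr}[a,x](\ominus x\oplus y)$ via $(G3)$ and left cancellation, and then apply $(N6)$ exactly once; note that your identity is really just the gyrator identity (Proposition~\ref{Pro:gyr}(3)) evaluated at $c=\ominus x\oplus y$, using $x\oplus(\ominus x\oplus y)=y$, so you could have cited it directly. Your route is shorter, needs fewer of the listed gyrogroup identities, and makes transparent that the whole theorem is the statement ``left gyrotranslation twists $\ominus x\oplus y$ by a gyration, and $N$ is gyration-invariant''; the paper's route has the minor advantage of only quoting ready-made items from Proposition~\ref{Pro:gyr} rather than rederiving an identity. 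Your closing remarks (bijectivity of $L_a$ with inverse $L_{\ominus a}$, and that no use of $(N4)$, $(N5)$ or $\ast$ is needed) are accurate.
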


\begin{proof}
Let $a,x,y\in G$. Recall that the {\it left gyrotranslation} by $a$, denote by $L_a$, is defined by $L_a(x)=a\oplus x$ for all $x\in G$.
Next, we prove that the fuzzy gyronorm metric $(M_N,\ast)$ is invariant under $L_a$. In fact,
\begin{align*}
M_N(L_a(x),L_a( y),t)&=M_N(a\oplus x,a \oplus y,t)
\\&=N(\ominus(a\oplus x)\oplus(a \oplus y),t)
\\&=N(\text{gyr}[a,x](\ominus x\ominus a)\oplus (a \oplus y),t) \quad \quad\quad\quad\text{by Proposition \ref{Pro:gyr} (4)~}
\\&=N((\ominus x\ominus a)\oplus \text{gyr}[x,a](a \oplus y),t) \quad\quad\quad\quad~~\text{by Proposition }\ref{Pro:gyr} (7)
\\&\quad\quad\quad\quad\quad\quad\quad\quad\quad\quad\quad\quad\quad\quad\quad\quad\quad\quad\quad\quad\text{and Definition \ref{Def:N}~} (N6)
\\&=N(\ominus x\oplus y,t)\quad \quad\quad\quad\quad\quad\quad\quad\quad\quad\quad\quad\quad\text{by Proposition \ref{Pro:gyr} (5)}
\\&=M_N(x,y,t).
\end{align*}
\end{proof}

\begin{corollary}
If $(G,N,\ast)$ is a fuzzy normed gyrogroup, then $G$ with the topology introduced by the fuzzy gyronorm metric with respect to $(N,\ast)$ is a left topological gyrogroup (every left gyrotranslation of $G$ is continuous). Hence, $G$ is homogeneous.
\end{corollary}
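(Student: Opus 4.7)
The plan is to leverage Theorem \ref{THe} together with elementary properties of the ball base for the topology $\tau_{M_N}$. First I would verify that, for each $a \in G$, the left gyrotranslation $L_a : G \to G$, $L_a(x) = a \oplus x$, is a bijection. The left cancellation law (Proposition \ref{Pro:gyr}(2)) gives $L_{\ominus a} \circ L_a = \mathrm{id}_G$; applying the same law with $\ominus a$ in place of $a$ and using the involution of inversion (Proposition \ref{Pro:gyr}(1)) yields $L_a \circ L_{\ominus a} = \mathrm{id}_G$. Hence $L_a^{-1} = L_{\ominus a}$.

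Next I would translate the continuity of $L_a$ into a statement about the basic open sets $B_{M_N}(x,\varepsilon,t) = \{y \in G : M_N(x,y,t) > 1 - \varepsilon\}$. By Theorem \ref{THe}, $M_N(a \oplus x, a \oplus y, t) = M_N(x,y,t)$ for all $x,y,a \in G$ and $t > 0$, so for any $\varepsilon \in (0,1)$ one has $L_a(y) \in B_{M_N}(a \oplus x, \varepsilon, t)$ iff $y \in B_{M_N}(x,\varepsilon,t)$. Therefore
$$L_a\bigl(B_{M_N}(x,\varepsilon,t)\bigr) = B_{M_N}(a \oplus x, \varepsilon, t),$$
which shows simultaneously that $L_a$ is an open map and (applying the same identity with $\ominus a$ in place of $a$, via $L_a^{-1} = L_{\ominus a}$) that it is continuous. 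Consequently $L_a$ is a homeomorphism of $(G,\tau_{M_N})$ for every $a \in G$, so $(G,\tau_{M_N},\oplus)$ is a left topological gyrogroup.

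For the homogeneity assertion, given any $a,b \in G$ I would consider $\varphi := L_b \circ L_{\ominus a}$. As a composition of two homeomorphisms it is itself a homeomorphism, and
$$\varphi(a) = b \oplus (\ominus a \oplus a) = b \oplus e = b,$$
so $G$ is topologically homogeneous.

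I do not expect any real obstacle: the substantive content (invariance of $M_N$ under left gyrotranslations) is precisely Theorem \ref{THe}, and what remains is the standard observation that an isometry of a fuzzy metric space sends basic balls to basic balls and hence is a homeomorphism. The only small point requiring care is the bijectivity of $L_a$, which is not a single application of the left cancellation law but requires invoking it twice together with the involution of inversion.
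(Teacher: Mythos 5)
Your proof is correct and follows the same route as the paper: both deduce that each $L_a$ is a homeomorphism from the isometry property in Theorem \ref{THe}, and both establish homogeneity via the composition $L_b\circ L_{\ominus a}$ (the paper writes $L_y\circ L_{\ominus x}$). You merely spell out the details the paper leaves implicit, namely the bijectivity $L_a^{-1}=L_{\ominus a}$ and the identity $L_a\bigl(B_{M_N}(x,\varepsilon,t)\bigr)=B_{M_N}(a\oplus x,\varepsilon,t)$.
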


\begin{proof}
The fact that $G$ with the topology introduced by the fuzzy gyronorm metric with respect is a left topological gyrogroup directly follows from Theorem \ref{THe}. Take any $x,y\in G$. Then it is obvious that $L_{y}(L_{\ominus x}(x))=y$, so by Theorem \ref{THe} $L_{y}\circ L_{\ominus x}$ is as required. Hence $G$ is homogeneous.
\end{proof}

\begin{theorem}\label{THM}
 Let $G$ be a gyrogroup with a fuzzy metric $(M,\ast)$. If $(M,\ast)$ is invariant under left
gyrotranslation, that is, $$M(a\oplus x,a\oplus y,t)=M(x,y,t)$$
for all $a,x,y \in G$ and $t>0$ and define $N_M(x,t)= M(e,x,t)$ then $(N_M,\ast)$ is a fuzzy gyronorm on $G$ that generates the
same fuzzy metric.
 \end{theorem}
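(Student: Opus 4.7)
The plan is to verify the six axioms (N1)--(N6) of Definition~\ref{Def:N} for $N_M(x,t) = M(e,x,t)$, and then check that the fuzzy gyronorm metric $(M_{N_M},\ast)$ produced from $N_M$ via Theorem~\ref{The:M} coincides with the original $(M,\ast)$. The two main tools throughout are the hypothesised left invariance of $M$ and the algebraic identities of Proposition~\ref{Pro:gyr}; after that, the verifications are essentially dictated by the definitions.

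Properties (N1), (N2) and (N5) are immediate from the corresponding conditions (i), (ii) and (v) in Definition~\ref{Def:M} applied to the pair $(e,x)$, together with the left cancellation law to deduce $x=e$ from $M(e,x,t)=1$. For (N3) the idea is to write
\[
N_M(\ominus x,t)=M(e,\ominus x,t)=M(x\oplus e,\,x\oplus(\ominus x),t)=M(x,e,t)=M(e,x,t)=N_M(x,t),
\]
using left invariance with $a=x$, then (G2), and finally the symmetry axiom (iii) of Definition~\ref{Def:M}. For (N4) I would combine the triangle axiom (iv) of Definition~\ref{Def:M} at the intermediate point $x$ with one application of left invariance and the left cancellation law:
\[
M(e,x\oplus y,t+s)\ \ge\ M(e,x,t)\ast M(x,x\oplus y,s)\ =\ N_M(x,t)\ast M(e,y,s)\ =\ N_M(x,t)\ast N_M(y,s).
\]

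The main obstacle is (N6), the invariance of $N_M$ under gyrations, because the hypothesis only gives left invariance. Here I would use the defining property (G3) to pull the gyroautomorphism out: $(a\oplus b)\oplus\mathrm{gyr}[a,b](x)=a\oplus(b\oplus x)$. Then two successive applications of left invariance (first by $\ominus(a\oplus b)^{-1}$-style cancellation via $a\oplus b$, then by $a$ and then by $b$) together with left cancellation yield
\[
M(e,\mathrm{gyr}[a,b](x),t)=M(a\oplus b,\,a\oplus(b\oplus x),t)=M(b,b\oplus x,t)=M(e,x,t),
\]
which is exactly $N_M(\mathrm{gyr}[a,b](x),t)=N_M(x,t)$. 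Thus $(N_M,\ast)$ is a fuzzy gyronorm.

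Finally, to show that $N_M$ regenerates $M$, I compute directly from the construction in Theorem~\ref{The:M}:
\[
M_{N_M}(x,y,t)=N_M(\ominus x\oplus y,t)=M(e,\ominus x\oplus y,t)=M(x\oplus e,\,x\oplus(\ominus x\oplus y),t)=M(x,y,t),
\]
where the third equality uses left invariance by $x$ and the fourth uses the left cancellation law (Proposition~\ref{Pro:gyr}(2)). This closes the loop and completes the proof.
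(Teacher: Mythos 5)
Your proposal is correct and follows essentially the same route as the paper: verify (N1)--(N6) directly, with left invariance doing all the work in (N3), (N4) and (N6), and then regenerate $M$ by one more application of left invariance and left cancellation. The only cosmetic differences are that in (N4) you apply the triangle axiom at the intermediate point $x$ rather than first translating by $\ominus x$ and splitting at $e$, and in (N6) you invoke (G3) directly where the paper cites the gyrator identity of Proposition~\ref{Pro:gyr}(3) --- both are equivalent.
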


\begin{proof}
Clearly, $N_M$ is a fuzzy set of $G\times(0,+\infty)$. Let us check $(N_M,\ast)$ satisfies the conditions in Definition \ref{Def:N}.

($N1$): $N_M(x,t)=M(e,x,t)>0$ holds for each $x\in G$ and $t>0$ by Definition \ref{Def:M}(i).

($N2$): By Definition \ref{Def:M}(ii), if $x=e$, then $N_M(e,t)=M(e,e,t)=1$ holds for each $t>0$. Conversely, if $N_M(x,t)=M(e,x,t)=1$ holds for each $t>0$, then $x=e$.

($N3$): Since $(M,\ast)$ is invariant under left
gyrotranslation, we have that the equalities
$$
N_M(\ominus x,t)=M(e,\ominus x,t)
=M(x,e,t)
=M(e,x,t)
=N_M(x,t)
$$
hold for each $t>0$.

($N4$): By Definition \ref{Def:M}(iii), (iv) and that the invariantness of $(M,\ast)$ under left
gyrotranslation, we have that
\begin{align*}
N_M(x\oplus y,t+s)&=M(e,x\oplus y,t+s)
\\&=M(\ominus x, y,t+s)
\\&\geq M(\ominus x, e,t)\ast M(e, y,s)
\\&=M( e, x,t)\ast M(e, y,s)
\\&=N_M(x,t)\ast N_M(y,s)
\end{align*}
hold for each $x,y\in G$ and $t,s>0$.

($N5$): It is obvious by Definition \ref{Def:M}(v).

($N6$): By the invariantness of $(M,\ast)$ under left
gyrotranslation, we have that

\begin{align*}
N_M(\text{gyr}[a,b](x),t)&=N_M(\ominus(a\oplus b)\oplus(a\oplus(b\oplus x),t)
\\&=M(e,\ominus(a\oplus b)\oplus(a\oplus(b\oplus x),t)
\\&=M(a\oplus b,a\oplus(b\oplus x),t)
\\&=M( b,b\oplus x),t)
\\&=M( e,x),t)
\\&=N_M(x,t)
\end{align*}
hold for each $a,b,x\in G$ and $t,s>0$.

Hence we have proved that $(N_M,\ast)$ is a fuzzy gyronorm on $G$.

Now according to Theorem \ref{THe}, since $(M,\ast)$ is invariant under left
gyrotranslation, we have that
$$
N_M(\ominus x\oplus y,t)=M(e,\ominus x\oplus y,t)=M(x,y,t)
$$
hold for each $x,y\in G$ and $t>0$. This implies that $(N_M,\ast)$ generates the
same fuzzy metric $(M,\ast)$.
\end{proof}

\begin{theorem}
Let $(G,N,\ast)$ be a fuzzy normed gyrogroup. If $\alpha\in \text{Aut~}G$ and $N(\alpha(x),t)=N(x,t)$ for all $x\in G$ and $t>0$, then $\alpha$ is an isometry of $(G,M_N,\ast)$, where $(M_N,\ast)$ is the fuzzy gyronorm metric with respect to $(N,\ast)$.
\end{theorem}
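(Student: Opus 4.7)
The plan is to unpack the definition of $M_N$ and push $\alpha$ through the gyrogroup operations inside the argument of $N$, then invoke the hypothesis $N(\alpha(z),t)=N(z,t)$.

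First I would verify the basic fact that any automorphism $\alpha\in\mathrm{Aut}\,G$ preserves the identity and inverses: from $\alpha(x)=\alpha(e\oplus x)=\alpha(e)\oplus\alpha(x)$ and left cancellation one gets $\alpha(e)=e$, and then from $\alpha(\ominus x)\oplus\alpha(x)=\alpha(\ominus x\oplus x)=\alpha(e)=e$ together with the uniqueness of two-sided inverses in a gyrogroup, one concludes $\alpha(\ominus x)=\ominus\alpha(x)$ for every $x\in G$.

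Next, fix $x,y\in G$ and $t>0$. Using the definition $M_N(u,v,t)=N(\ominus u\oplus v,t)$, I compute
\begin{align*}
M_N(\alpha(x),\alpha(y),t) &= N(\ominus\alpha(x)\oplus\alpha(y),t) \\
&= N(\alpha(\ominus x)\oplus\alpha(y),t) \\
&= N(\alpha(\ominus x\oplus y),t) \\
&= N(\ominus x\oplus y,t) \\
&= M_N(x,y,t),
\end{align*}
where the second equality uses $\alpha(\ominus x)=\ominus\alpha(x)$, the third uses that $\alpha$ is a groupoid homomorphism, and the fourth uses the hypothesis with $z=\ominus x\oplus y$.

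There is really no main obstacle here; the only subtlety is recognizing that gyrogroup automorphisms respect inverses, which requires the uniqueness of two-sided inverses for gyrogroups (a fact noted right after Definition \ref{Def:gyr}). Everything else is a direct substitution.
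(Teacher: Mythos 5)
Your proof is correct and follows essentially the same route as the paper: unpack $M_N(\alpha(x),\alpha(y),t)=N(\ominus\alpha(x)\oplus\alpha(y),t)$, rewrite the argument as $\alpha(\ominus x\oplus y)$, and apply the hypothesis. You are in fact slightly more careful than the paper, since you explicitly justify $\alpha(\ominus x)=\ominus\alpha(x)$ via uniqueness of two-sided inverses, a step the paper's displayed computation glosses over (its intermediate line reads $N(\ominus\alpha(x\oplus y),t)$, which only matches your cleaner $N(\alpha(\ominus x\oplus y),t)$ after also invoking axiom $(N3)$ or the inverse-preservation you proved).
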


\begin{proof}
 By assumption, we have that
 \begin{align*}
 M_N(\alpha(x),\alpha(y),t)&=N(\ominus \alpha(x)\oplus \alpha(y),t)
 \\&=N(\ominus \alpha(x\oplus y),t)
 \\&=N(\ominus x\oplus y,t)
 \\&=M_N(x,y,t).
 \end{align*}
\end{proof}

\begin{corollary}
 If $(G,N,\ast)$ is a fuzzy normed gyrogroup, then the gyroautomorphisms of $G$ are
isometries of $(G,M_N,\ast)$, where $(M_N,\ast)$ is the fuzzy gyronorm metric with respect to $(N,\ast)$.
\end{corollary}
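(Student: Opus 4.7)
The plan is to deduce this corollary as an immediate application of the preceding theorem, whose hypothesis needs only two things about a map $\alpha$: that $\alpha \in \operatorname{Aut} G$, and that $N$ is invariant under $\alpha$ in the sense that $N(\alpha(x),t)=N(x,t)$ for all $x\in G$ and $t>0$. So the entire task reduces to verifying that every gyroautomorphism $\operatorname{gyr}[a,b]$ (for arbitrary $a,b\in G$) satisfies both conditions.

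First I would observe that, by the gyrogroup axiom $(G3)$ in Definition \ref{Def:gyr}, $\operatorname{gyr}[a,b]$ belongs to $\operatorname{Aut}(G,\oplus)$ by definition; this gives the first hypothesis for free. Second, I would invoke property $(N6)$ of Definition \ref{Def:N}, which asserts precisely that $N(\operatorname{gyr}[a,b](x),t)=N(x,t)$ for all $x\in G$, all $a,b\in G$ and all $t>0$; this is exactly the invariance hypothesis of the previous theorem.

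With both conditions satisfied, the previous theorem applies with $\alpha:=\operatorname{gyr}[a,b]$, and yields that $\operatorname{gyr}[a,b]$ is an isometry of $(G,M_N,\ast)$. Since $a,b\in G$ were arbitrary, every gyroautomorphism of $G$ is an isometry of $(G,M_N,\ast)$, which is the desired conclusion. There is no real obstacle here; the content of the corollary is just the observation that the two conditions singled out in the previous theorem are built into the very definitions of gyrogroup (axiom $(G3)$) and of a fuzzy gyronorm (axiom $(N6)$), so the corollary is essentially a restatement of the theorem for the canonical family of automorphisms attached to the gyrogroup structure.
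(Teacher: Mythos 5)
Your proof is correct and is exactly the intended argument: the paper states this corollary without proof precisely because it follows immediately from the preceding theorem once one notes that $\operatorname{gyr}[a,b]\in\operatorname{Aut}(G,\oplus)$ by axiom $(G3)$ and that $N(\operatorname{gyr}[a,b](x),t)=N(x,t)$ by axiom $(N6)$. Nothing further is needed.
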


%\begin{theorem}
% Let $G$ be a gyrogroup with a fuzzy metric $(M,\ast)$. If $(M,\ast)$ is invariant under left
%gyrotranslation, that is, $$M(a\oplus x, a\oplus y,t)=M(x,y,t)$$
%for all $x,y\in G$ and $t\in [0,+\infty)$
%\end{theorem}
We have studied the fuzzy metric and geometric structures on fuzzy normed gyrogroups. Next, we shall study the topological structures on fuzzy normed gyrogroups. Some sufficient conditions which make gyrogroups with some topologies become topological gyrogroups are found.

\begin{theorem}\label{The:Klee}
 Let $(G,N,\ast)$ be a fuzzy normed gyrogroup and $(M_N,\ast)$ the fuzzy gyronorm metric with respect to $(N,\ast)$.
Consider the following conditions.
\begin{enumerate}
\item[$(I)$] Right-gyrotranslation inequality: $M_N(x\oplus a,y\oplus a,t)\geq M_N(x,y,t)$ for all $x,y,a\in G$ and $t>0$;
\item[$(I)'$] Klee's condition: $M_N(x\oplus y,a\oplus b,t+s)\geq M_N(x,a,t)\ast M_N(y,b,s)$ for all $x,y,a,b\in G$ and $t,s>0$;
\item[$(II)$]  Commutative-like condition: $N((a\oplus x)\oplus \text{gyr}[a,x](y\ominus a),t)=N( x\oplus y,t)$ for all $x,y,a\in G$ and $t>0$;
\item[$(II)'$]  Right-gyrotranslation invariant: $M_N(x\oplus a,y\oplus a,t)= M_N(x,y,t)$ for all $x,y,a\in G$ and $t>0$.
\end{enumerate}
Then $(II)\Leftrightarrow(II)'\Rightarrow (I)'\Leftrightarrow(I)$.
\end{theorem}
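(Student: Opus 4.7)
The plan is to establish $(I)\Leftrightarrow(I)'$ first (which makes $(II)'\Rightarrow(I)'$ trivial, since $(II)'$ is the equality version of $(I)$), and then do the serious work on $(II)\Leftrightarrow(II)'$. Throughout I rely on $M_N(x,y,t)=N(\ominus x\oplus y,t)$, axiom $(N6)$, the identities of Proposition \ref{Pro:gyr}, and the left-gyrotranslation invariance of $M_N$ already established in Theorem \ref{THe}.

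For $(I)'\Rightarrow(I)$, I would set $y=b$ in Klee's inequality so that $M_N(b,b,s)=1$ and only $M_N(x,a,t)$ remains on the right, then let $s\to 0^+$ and use continuity in $t$ (Definition \ref{Def:M}(v)) to conclude $M_N(x\oplus b,a\oplus b,t)\geq M_N(x,a,t)$. For the converse $(I)\Rightarrow(I)'$, the fuzzy triangle inequality gives
\[
M_N(x\oplus y,a\oplus b,t+s)\geq M_N(x\oplus y,a\oplus y,t)\ast M_N(a\oplus y,a\oplus b,s),
\]
where the first factor is $\geq M_N(x,a,t)$ by $(I)$ and the second equals $M_N(y,b,s)$ by left-gyrotranslation invariance.

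The centerpiece is $(II)\Leftrightarrow(II)'$. My plan is to derive once and for all the bridging identity
\[
N(\ominus(x\oplus a)\oplus(y\oplus a),t)=N(\ominus a\oplus(\ominus x\oplus(y\oplus a)),t)
\]
from Proposition \ref{Pro:gyr}(5). Substituting $\ominus a\mapsto x$, $b\mapsto a$ in (5) and then left-cancelling $(x\oplus a)$ using Proposition \ref{Pro:gyr}(2) yields
\[
\ominus(x\oplus a)\oplus(x\oplus c)=\text{gyr}[x,a](\ominus a\oplus c),
\]
to which $(N6)$ then gives $N(\ominus(x\oplus a)\oplus(x\oplus c),t)=N(\ominus a\oplus c,t)$; choosing $c=\ominus x\oplus(y\oplus a)$ so that $x\oplus c=y\oplus a$ (again by left cancellation) produces the bridging identity. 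Independently, rewriting $(II)$ via (G3) as $N(a\oplus(x\oplus(y\ominus a)),t)=N(x\oplus y,t)$ and then substituting $a\mapsto\ominus a$, $x\mapsto\ominus x$ converts $(II)$ into $N(\ominus a\oplus(\ominus x\oplus(y\oplus a)),t)=N(\ominus x\oplus y,t)$. Combining this with the bridging identity delivers $(II)'$, and the same chain read in reverse yields $(II)'\Rightarrow(II)$.

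The main obstacle is spotting the bridging identity; once Proposition \ref{Pro:gyr}(5) is invoked in the right way to connect the "right-gyrotranslation" shape $\ominus(x\oplus a)\oplus(y\oplus a)$ with the "conjugation-like" shape $\ominus a\oplus(\ominus x\oplus(y\oplus a))$ despite the intervening nontrivial gyrations, the remaining steps are straightforward algebraic substitutions.
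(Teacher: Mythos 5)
Your proof is correct, and its overall architecture matches the paper's: $(II)'\Rightarrow(I)'$ obtained as $(II)'\Rightarrow(I)\Rightarrow(I)'$, the equivalence $(I)\Leftrightarrow(I)'$ from the fuzzy triangle inequality combined with the left-gyrotranslation invariance of Theorem \ref{THe}, and a direct algebraic reduction of $M_N(x\oplus a,y\oplus a,t)$ to the form of $(II)$ under the substitution $a\mapsto\ominus a$, $x\mapsto\ominus x$. Where you genuinely diverge is in the identities driving $(II)\Leftrightarrow(II)'$: the paper expands $\ominus(x\oplus a)$ via Proposition \ref{Pro:gyr}(4), extracts the gyration with $(N6)$, and then must identify $\text{gyr}[a,x]$ with $\text{gyr}[\ominus a,\ominus x]$ by the even property, whereas your bridging identity $\ominus(x\oplus a)\oplus(x\oplus c)=\text{gyr}[x,a](\ominus a\oplus c)$ (from Proposition \ref{Pro:gyr}(5), or equivalently the gyrator identity (3), plus left cancellation), combined with the $(G3)$-rewriting $N(a\oplus(x\oplus(y\ominus a)),t)=N(x\oplus y,t)$ of $(II)$, bypasses the even property entirely; since your substitution is a bijection of $G^3$ and every other step is an unconditional identity, the chain reverses to give $(II)'\Rightarrow(II)$ for free. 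This buys something concrete: the even property as printed in Proposition \ref{Pro:gyr}(6), namely $\text{gyr}[a,b]=\text{gyr}[\ominus b,\ominus a]$, is mis-stated (combined with (7) it would force every gyration to be an involution; the correct identity, and the one the paper's computation actually needs, is $\text{gyr}[a,b]=\text{gyr}[\ominus a,\ominus b]$), so your route is immune to that issue. A second small improvement: for $(I)'\Rightarrow(I)$ the paper invokes the undefined quantity $M(a,a,0)$, while your limit $s\to 0^{+}$ using the continuity of $M_N(x,y,\cdot)$ makes that step rigorous.
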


\begin{proof}
 The implication $(II)'\Rightarrow (I)'$ is obvious. Thus it is enough to show that $(II)\Leftrightarrow(II)'$ and $(I)'\Leftrightarrow(I)$.

 $(I)\Leftrightarrow(I)'$. Assume that the right-gyrotranslation inequality holds. Then we have that
 \begin{align*}
 M_N(x\oplus y,a\oplus b,t+s)&\geq M_N(x\oplus y,x\oplus b,s)\ast M_N(x\oplus b,a\oplus b,t)
 \\&= M_N(y,b,s)\ast M_N(x\oplus b,a\oplus b,t)
 \\&\geq M_N(x,a,t)\ast M_N(y,b,s).
 \end{align*}

 Conversely,
 \begin{align*}
 M_N(x\oplus a,y\oplus a,t)&\geq M_N(x,y,t)\ast M(a,a,0)
 \\&=M_N(x,y,t).
 \end{align*}

  $(II)\Leftrightarrow(II)'$. Assume commutative-like condition holds. Let $x,a,y\in G$ and  $t,s>0$. Then we have that
  \begin{align*}
  M_N(x\oplus a,y\oplus a,t)&=N(\ominus(x\oplus a)\oplus (y\oplus a),t)
  \\&=N(\text{gyr}[x,a](\ominus a\ominus x)\oplus (y\oplus a),t)
  \\&=N((\ominus a\ominus x)\oplus \text{gyr}[a,x](y\oplus a),t)
  \\&=N((\ominus a\ominus x)\oplus \text{gyr}[\ominus a,\ominus x](y\oplus a),t)
  \\&=N(\ominus x\oplus y,t)
  \\&=M_N(x,y,t).
  \end{align*}
  Conversely,
  \begin{align*}
  N(x\oplus y,t)&=M_N(\ominus x,y,t)
  \\&= M_N(\ominus x\ominus a,y\ominus a,t)
  \\&=N(\ominus(\ominus x\ominus a)\oplus(y\ominus a),t)
  \\&=N(\text{gyr}[\ominus x,\ominus a](a\oplus x)\oplus(y\ominus a),t)
  \\&=N((a\oplus x)\oplus\text{gyr}[\ominus a,\ominus x](y\ominus a),t)
  \\&=N((a\oplus x)\oplus\text{gyr}[a,x](y\ominus a),t).
  \end{align*}
\end{proof}

\begin{theorem}\label{Them:F}
 Let $(G,N,\ast)$ be a fuzzy normed gyrogroup. If one of the conditions $(I)$ and $(I)'$ in Theorem
\ref{The:Klee} holds, then $G$ is a topological gyrogroup endowed with the topology induced
by the fuzzy gyronorm metric $(M_N,\ast)$ with respect to $(N,\ast)$.
\end{theorem}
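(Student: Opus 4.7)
The plan is to reduce the problem to showing two things: (a) joint continuity of the binary operation $\oplus$, and (b) continuity of the inversion $\ominus$. Since the fuzzy metric topology $\tau_{M_N}$ has a countable base at each point (use $B_{M_N}(x,1/n,1/n)$), it suffices to argue in terms of sequences.

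By Theorem \ref{The:Klee} we may assume that condition $(I)'$ (Klee's inequality) holds, since $(I)$ and $(I)'$ are equivalent. Joint continuity of $\oplus$ is then almost immediate: if $x_n\to a$ and $y_n\to b$ in $\tau_{M_N}$, then for any $t>0$ Klee's inequality with $s=t/2$ gives
\begin{equation*}
M_N(x_n\oplus y_n,\,a\oplus b,\,t)\;\ge\;M_N(x_n,a,t/2)\,\ast\,M_N(y_n,b,t/2).
\end{equation*}
Both factors on the right tend to $1$, and continuity of $\ast$ together with $1\ast 1=1$ forces the left side to tend to $1$. Hence $x_n\oplus y_n\to a\oplus b$.

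For continuity of the inversion, I plan to work directly with condition $(I)$ and the algebraic identities from Proposition \ref{Pro:gyr}. The key computation proceeds as follows. Unwinding the definition of $M_N$, using the gyrator identity (Prop.\ \ref{Pro:gyr}(4)) and the invariance of $N$ under inversion ($N3$) and gyrations ($N6$), one checks the symmetry-type identity
\begin{equation*}
M_N(\ominus x,\ominus a,t)\;=\;N(x\ominus a,\,t)\;=\;N(a\ominus x,\,t).
\end{equation*}
On the other hand, right-translating by $\ominus a$ in condition $(I)$ gives
\begin{equation*}
M_N(x\ominus a,\,e,\,t)\;\ge\;M_N(x,a,t),
\end{equation*}
and the left-hand side is precisely $N(\ominus(x\ominus a),t)=N(a\ominus x,t)$ (again by $(N3)$, Prop.\ \ref{Pro:gyr}(4), and $(N6)$). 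Combining these yields $M_N(\ominus x,\ominus a,t)\ge M_N(x,a,t)$, so $x_n\to a$ implies $\ominus x_n\to \ominus a$.

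The main technical hurdle is the inversion step: because the operation is non-associative and only \emph{left} gyrotranslation is automatically isometric (Theorem \ref{THe}), I cannot directly ``cancel'' on the right, and the various rearrangements must be justified one by one via Proposition \ref{Pro:gyr}(4) and the gyration-invariance axiom $(N6)$. Once this bookkeeping is done, no further analysis is required, and the two continuity statements together give that $(G,\tau_{M_N},\oplus)$ is a topological gyrogroup.
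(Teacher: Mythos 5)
Your proposal is correct and follows essentially the same route as the paper: Klee's condition $(I)'$ gives joint continuity of $\oplus$, and the right-gyrotranslation inequality $(I)$, combined with $(N3)$, $(N6)$ and Proposition \ref{Pro:gyr}(4), gives $M_N(\ominus x,\ominus a,t)\ge M_N(x,a,t)$ and hence continuity of $\ominus$; the only difference is that you phrase the argument with sequences (legitimate, since $\tau_{M_N}$ is first countable) while the paper works with the basic balls $B_{M_N}(\cdot,\epsilon,t)$. In fact your inversion step is slightly more careful than the paper's, which writes the key comparison as a chain of equalities where only the inequality supplied by condition $(I)$ is actually available.
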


\begin{proof}
Firstly, we shall prove that the operator $\oplus$: $G\times G\rightarrow G$ is continuous. Take any $x,y\in G$
and any open neighborhood $V$ of $x \oplus y$. Then there are $\epsilon\in (0,1)$ and $t_0>0$ such that $B_{M_N}(x \oplus y, \epsilon,t_0)=\{z\in G:M_N(x\oplus y,z,t_0)>1-\epsilon \}\subseteq V$. Since the $\ast$ is a continuous $t$-norm, there is $\epsilon_0\in (0,1)$ such that $$(1-\epsilon_0,1]\ast (1-\epsilon_0,1]\subseteq (1-\epsilon, 1].$$ Then $B_{M_N}(x,\epsilon_0,\frac{t_0}{2})$ and $B_{M_N}(y,\epsilon_0,\frac{t_0}{2})$ are open neighborhoods of $x$ and $y$, respectively. Take any $a\in B_{M_N}(x,\epsilon_0,\frac{t_0}{2})$ and $b\in B_{M_N}(y,\epsilon_0,\frac{t_0}{2})$. Then by the conditions in Theorem
\ref{The:Klee} (II), $$M_N(x\oplus y,a\oplus b,t_0)\geq M_N(x,a,\frac{t_0}{2})\ast M_N(y,b,\frac{t_0}{2})>1-\epsilon,$$
since $M_N(x,a,\frac{t_0}{2})>1-\epsilon_0$ and $M_N(y,b,\frac{t_0}{2})>1-\epsilon_0$. This implies that $$B_{M_N}(x,\epsilon_0,\frac{t_0}{2})\oplus B_{M_N}(y,\epsilon_0,\frac{t_0}{2})\subseteq B_{M_N}(x \oplus y, \epsilon,t_0) \subseteq V.$$
Hence we prove that the operator $\oplus$ is continuous.

Secondly, we shall prove that the operator $\ominus$: $G\rightarrow G$ is continuous. Take any $x\in G$ and any open neighborhood $V$ of $\ominus x$.
Then there are $\epsilon\in (0,1)$ and $t_0>0$ such that $B_{M_N}(\ominus x, \epsilon,t_0)=\{z\in G:M_N(\ominus x,z,t_0)>1-\epsilon \}\subseteq V$.
Clearly, $B_{M_N}(x, \epsilon,t_0)$ is an open neighborhood of $x$.

Take any $y \in B_{M_N}(x, \epsilon,t_0)$. Then

\begin{align*}
M_N(\ominus x,\ominus y, t_0)&= M_N(e,\ominus y\oplus x, t_0)
\\&=M_N( y, x, t_0)
\\&=M_N(x, y, t_0)>1-\epsilon
\end{align*}
Hence $\ominus y\in B_{M_N}(\ominus x, \epsilon,t_0)$, this implies that $\ominus B_{M_N}(x, \epsilon,t_0)\subseteq B_{M_N}(\ominus x, \epsilon,t_0)$. Thus we have proved that the operator $\ominus$ is continuous.
\end{proof}

%%%%%%%%%%%%%%%%%%%%%%%%%%%%%%%%%%%%%%%%%%%%%%%%
%%%%%%%%%%%%%%%%%%%%%%%%%%%%%%%%%%%%%%%%%%%%%%%%
\section{Completions of invariant standard fuzzy metrics on gyrogroups}\label{Sec:Com}
In this section, we shall study the fuzzy metric completion of an invariant metric $d$ on a gyrogroup $G$ by showing that the gyrogroup operator $\oplus$ can be extended to the fuzzy metric completion $(\widehat{G},\widehat{M_d},\ast)$ of the standard fuzzy metric space $(G,M_d,\ast)$ of the metric space $(G,d)$ such that $(\widehat{G},\widehat{M_d},\ast)$ become a gyrogroup containing $G$ as a subgyrogroup. Let us begin with some definitions and terms.

A sequence $(x_n)_{n\in\mathbb{N}}$ in a fuzzy metric space $(X, M,\ast)$ is said to be a {\it Cauchy sequence} provided that for each $\varepsilon\in(0, 1)$ and $t>0$, there exists $n_0\in \mathbb{N}$ such that $ M(x_n, x_m, t) >1 -\varepsilon$ whenever $n, m \geq n_0$. A fuzzy metric space $(X, M,\ast)$ where every Cauchy sequence converges is called {\it complete}.

Let $(X, M,\ast)$ be a fuzzy metric space. Then a fuzzy metric completion of $(X, M,\ast)$  is a complete fuzzy metric space $(\widehat{X}, \widehat{M},\ast)$ such that $(X, M,\ast)$ is isometric to a dense subspace of $\widehat{X}$. Unfortunately, there is a fuzzy metric that does not admit any fuzzy metric completion \cite[Example 2]{GrSR}.

Let $(X,d)$ be a metric space. Denote by $a \wedge b= \text{min}\{a,b\}$ for all $a, b \in[0, 1]$, and let $M_d$ the fuzzy set defined on $X\times X\times(0,+\infty)$ by $$M(x,y,t)=\frac{t}{t+d(x,y)}.$$

Then $(X, M_d,\wedge)$ is a fuzzy metric space (see \cite[ Example 2.9 and Remark 2.10]{GeVe}). Since for any continuous $t$-norm $\ast$, one can easily show that $\ast\leq \wedge$, $(X, M_d,\ast)$ is also a fuzzy metric space for any continuous $t$-norm $\ast$. We call this fuzzy metric $(M_d,\ast)$  induced by a metric $d$ the {\it standard  fuzzy  metric} with respect to the continuous $t$-norm $\ast$. Gregori and Romaguera proved that for every metric space $(X,d)$ the standard fuzzy metric space $(X,M_d,\cdot)$ with respect to the usual multiplication $\cdot$ on $[0,1]$ admits an (up to isometry) unique  fuzzy metric completion, which is exactly the standard fuzzy metric space of the completion of $(X, d)$ with respect to the continuous $t$-norm $\cdot$ \cite[Proposition 1]{GrSR}. In fact, borrowing their skills we have the more general result:

\begin{proposition}\label{PRO}
Let $(X,d)$ be a metric space and $\ast$ a continuous $t$-norm. Then the standard fuzzy metric space $(X,M_d,\ast)$ with respect to $\ast$ admits an (up to isometry) unique fuzzy metric completion, which is exactly the standard fuzzy metric space of the completion of $(X, d)$ with respect to the $\ast$.
\end{proposition}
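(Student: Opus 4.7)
The plan is to adapt the Gregori--Romaguera argument (which handled $\ast=\cdot$) to an arbitrary continuous $t$-norm $\ast$. The key observation is that, for the standard fuzzy metric $M_d(x,y,t)=\frac{t}{t+d(x,y)}$, everything is governed by $d$ alone: the map $r\mapsto\frac{t}{t+r}$ is a continuous strictly decreasing bijection of $[0,+\infty)$ onto $(0,1]$, translating all metric data into fuzzy-metric data independently of the $t$-norm chosen. The role of $\ast$ is essentially confined to the strong triangle inequality (axiom (iv) of Definition \ref{Def:M}), and this transfers from $\wedge$ to $\ast$ at no cost because $\ast\leq\wedge$.

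First, I would let $(\widehat{X},\widehat{d})$ be the classical metric completion of $(X,d)$, identifying $X$ with its dense isometric copy in $\widehat{X}$, and take $(\widehat{X},M_{\widehat{d}},\ast)$ as the candidate completion. Axioms (i)--(iii) and (v) of Definition \ref{Def:M} are immediate from the formula, and (iv) follows from the George--Veeramani fact that $(\widehat{X},M_{\widehat{d}},\wedge)$ is a fuzzy metric space, together with $\ast\leq\wedge$. Since $M_{\widehat{d}}$ restricts to $M_d$ on $X\times X\times(0,+\infty)$, the inclusion $X\hookrightarrow\widehat{X}$ is an isometric embedding of $(X,M_d,\ast)$ into $(\widehat{X},M_{\widehat{d}},\ast)$.

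Next I would establish density and completeness. Because $r\mapsto\frac{t}{t+r}$ is a homeomorphism, for each fixed $t>0$ one has $\widehat{d}(x_n,x)\to 0$ if and only if $M_{\widehat{d}}(x_n,x,t)\to 1$; hence $\tau_{M_{\widehat{d}}}=\tau_{\widehat{d}}$, which gives density of $X$ in $(\widehat{X},\tau_{M_{\widehat{d}}})$. For completeness, a sequence $(x_n)$ that is Cauchy in $(\widehat{X},M_{\widehat{d}},\ast)$ is, on specialising the Cauchy condition at $t=1$, also Cauchy in $(\widehat{X},\widehat{d})$; by classical completeness it converges to some $x\in\widehat{X}$, and the equivalence above then yields $M_{\widehat{d}}(x_n,x,t)\to 1$ for every $t>0$. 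Thus $(\widehat{X},M_{\widehat{d}},\ast)$ is a fuzzy metric completion of $(X,M_d,\ast)$.

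For uniqueness, let $(Y,N,\ast)$ be any other fuzzy metric completion, with isometric dense embedding $\iota\colon X\to Y$. Each $\xi\in\widehat{X}$ is the $\widehat{d}$-limit of some sequence $(x_n)\subset X$; by the equivalence above this sequence is also $M_d$-Cauchy, hence $(\iota(x_n))$ is $N$-Cauchy and converges to a unique $\phi(\xi)\in Y$. I would check independence of the representative sequence, then verify the identity $N(\phi(\xi),\phi(\eta),t)=M_{\widehat{d}}(\xi,\eta,t)$ by passing to the limit in the corresponding identity on $X$, and obtain the inverse by the symmetric construction. The main subtlety I expect is the careful bookkeeping between the two notions of Cauchy sequence on arbitrary sequences in $X$ (not merely at a fixed limit point); once this equivalence is in place, the uniqueness argument follows standard lines.
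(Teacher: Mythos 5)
Your proposal is correct and follows essentially the same route as the paper: take the standard fuzzy metric of the classical completion $(\widehat{X},\widehat{d})$, observe that $\tau_{M_d}=\tau_d$ and that Cauchy sequences for $(M_d,\ast)$ and for $d$ coincide, and conclude completeness and density. The only divergence is that for uniqueness the paper simply invokes Lemma~1 of Gregori--Romaguera, whereas you sketch the isometry directly by sending limits of Cauchy sequences; that construction works, provided you justify the passage to the limit in $N(\iota(x_n),\iota(y_n),t)=M_d(x_n,y_n,t)$ via the (standard, but not automatic) joint continuity of a George--Veeramani fuzzy metric in its first two arguments.
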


\begin{proof}
For any continuous $t$-norm $\star$, it is known that $(X,M_d,\star)$ is a fuzzy metric space above. Also we have the following facts:
\begin{enumerate}
\item the topologies $\tau_{M_d}$ and $\tau_d$ introduced by $(M_d,\star)$ and $d$ on $X$, respectively, are same;
\item if $(X,d)$ is a complete metric space, then $(X,M_d,\star)$ is a complete fuzzy metric space.
 \end{enumerate}

 In fact, clearly, a sequence $(x_n)_n\subseteq X$ converges to $x$ in $(X,\tau_{M_d})$ if and only if $(x_n)_n$ converges to $x$ in $(X,\tau_{d})$.  Since the space $(X,\tau_d)$ and $(X,\tau_{M_d})$ are first-countable, we have that $\tau_{M_d}=\tau_d$. Clearly, every Cauchy sequence $(x_n)_n$ in $(X,M_d,\star)$ is also a Cauchy sequence in $(X,d)$, so by fact (1) one can obtain the fact (2).

 Let $(\widehat{X},M_{\widehat{d}},\ast)$ be the standard fuzzy  metric space of the completion $(\widehat{X},\widehat{d})$ of $(X,d)$. Then $(\widehat{X},M_{\widehat{d}},\ast)$ is a complete fuzzy metric space by the fact (2) and it is the unique fuzzy metric completion of $(X,M_d,\ast)$ (up to isometry). Indeed, since there is an isometry $f$ from $(X,d)$ onto a dense subspace of $(\widehat{X},\widehat{d})$ and by fact (1), $f(X )$ is dense in $(\widehat{X},M_{\widehat{d}},\ast)$.  Furthermore, clearly, by the equality $\widehat{d}(f(x),f(y))=d(x,y)$, we have that $M_{\widehat{d}}(f(x),f(y),t)=M_d(x,y,t)$ for all $x,y\in X$ and $t>0$. So $(\widehat{X},M_{\widehat{d}},\ast)$ is a fuzzy metric completion of $(X,M_d,\ast)$, and, by \cite[Lemma 1]{GrSR}, it is unique up to isometry.
\end{proof}

\begin{definition}
Let $G$ be a gyrogroup with a fuzzy metric $(M,\ast)$ (resp. metric $d$). We say $(M,\ast)$ (resp. $d$) is invariant on $G$ if $(M,\ast)$ (resp. $d$) is invariant under left gyrotranslations and right gyrotranslations, that is
$$M(a\oplus x,a\oplus y, t)=M(x,y, t)=M( x\oplus a, y\oplus a, t)$$
for all $x,y,a\in G$ and $t>0$ (resp. $d(a\oplus x,a\oplus y)=d(x,y)=d( x\oplus a, y\oplus a)$ for all $x,y,a\in G$).
\end{definition}

Let $G$ be a gyrogroup with a fuzzy metric $(M,\ast)$. We call $(G,M,\ast)$ is a {\it fuzzy metric gyrogroup} if $G$ with the topology introduced by $(M,\ast)$ is a topological gyrogroup.
\begin{theorem}\label{The:In}
Let $G$ be a gyrogroup with an invariant fuzzy metric $(M,\ast)$. Then $(G,M,\ast)$ is a fuzzy metric gyrogroup.
\end{theorem}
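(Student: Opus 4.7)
The plan is to reduce Theorem \ref{The:In} to a chain of prior results, using the two halves of invariance of $(M,\ast)$ separately. First, since $(M,\ast)$ is invariant under left gyrotranslations, Theorem \ref{THM} applies directly: the formula $N_M(x,t)=M(e,x,t)$ defines a fuzzy gyronorm on $G$, so that $(G,N_M,\ast)$ is a fuzzy normed gyrogroup, and the associated fuzzy gyronorm metric $(M_{N_M},\ast)$ coincides with $(M,\ast)$. In particular, the topology induced on $G$ by $(M,\ast)$ equals the topology induced by the fuzzy gyronorm metric of $(G,N_M,\ast)$.

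Next, I would use the right-invariance half of the hypothesis to check condition $(II)'$ of Theorem \ref{The:Klee} for the fuzzy normed gyrogroup $(G,N_M,\ast)$. Indeed, since $M_{N_M}=M$, the identity
\[
M_{N_M}(x\oplus a,y\oplus a,t)=M(x\oplus a,y\oplus a,t)=M(x,y,t)=M_{N_M}(x,y,t)
\]
for all $x,y,a\in G$ and $t>0$ is just a rewriting of the right-gyrotranslation invariance of $M$, which is exactly condition $(II)'$.

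By Theorem \ref{The:Klee}, condition $(II)'$ implies condition $(I)'$ (equivalently $(I)$). Therefore Theorem \ref{Them:F} applies to $(G,N_M,\ast)$, giving that $G$ endowed with the topology induced by the fuzzy gyronorm metric $(M_{N_M},\ast)=(M,\ast)$ is a topological gyrogroup. By definition of a fuzzy metric gyrogroup, this is precisely the assertion that $(G,M,\ast)$ is a fuzzy metric gyrogroup.

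The proof is essentially a bookkeeping argument: the genuine work was done in Theorems \ref{THM}, \ref{The:Klee} and \ref{Them:F}. The only point requiring any care is the observation that right-invariance of the fuzzy metric is exactly the right-gyrotranslation invariance of the induced fuzzy gyronorm metric, which is immediate once one knows $M_{N_M}=M$; so I do not expect a real obstacle here.
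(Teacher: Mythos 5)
Your proposal is correct and follows essentially the same route as the paper: apply Theorem \ref{THM} to obtain the fuzzy gyronorm $N_M$ with $M_{N_M}=M$, then invoke Theorem \ref{Them:F}. You are in fact slightly more careful than the paper's own two-line proof, since you explicitly verify that right-invariance of $(M,\ast)$ gives condition $(II)'$, hence $(I)'$, of Theorem \ref{The:Klee}, which is the hypothesis Theorem \ref{Them:F} actually requires.
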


\begin{proof}
Since $(M,\ast)$ is invariant on $G$, according to Theorem \ref{THM}, $G$ is a fuzzy normed gyrogroup with the corresponding fuzzy metric $(M,\ast)$.
Hence $G$ with the topology introduced by $(M,\ast)$ is a topological gyrogroup by Theorem \ref{Them:F}.
\end{proof}

\begin{theorem}\label{The:comp}
Let $d$ be an invariant metric on a gyrogroup $G$ and $(\widehat{G},\widehat{d})$ is the metric completion of the metric space $(G,d)$. Then for any continuous $t$-norm $\ast$, the standard fuzzy metric space $(\widehat{G},M_{\widehat{d}},\ast)$ of $(\widehat{G},\widehat{d})$ is the (up to isometry) unique fuzzy metric completion of the standard fuzzy metric space $(G,M_d,\ast)$ of $(G,d)$. Furthermore, $(\widehat{G},M_{\widehat{d}},\ast)$ is a fuzzy metric gyrogroup containing $(G,M_d,\ast)$ as a dense fuzzy metric subgyrogroup and $M_{\widehat{d}}$ is invariant on $\widehat{G}$.
\end{theorem}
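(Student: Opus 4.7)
The plan is to invoke Proposition \ref{PRO} for the fuzzy metric completion part, then lift the gyrogroup operations from $G$ to the completion $\widehat{G}$ by uniform continuity, and finally check that invariance passes to the limit so that Theorem \ref{The:In} applies. The first step is to observe that the two-sided invariance of $d$ makes $\oplus$, $\ominus$, and each gyration uniformly continuous on $G$: for $x,x',y,y'\in G$ the triangle inequality together with left and right invariance gives
\begin{align*}
d(x\oplus y,\,x'\oplus y') &\leq d(x\oplus y,\,x\oplus y') + d(x\oplus y',\,x'\oplus y')\\
&= d(y,y') + d(x,x');
\end{align*}
the left cancellation law (Proposition \ref{Pro:gyr}(2)) combined with left invariance yields $d(\ominus x,\ominus y) = d(\ominus x\oplus x,\ominus y\oplus x) = d(e,\ominus y\oplus x) = d(y,x)$, so $\ominus$ is an isometry; and the gyrator identity (Proposition \ref{Pro:gyr}(3)) combined with left invariance shows that each $\mathrm{gyr}[a,b]$ is an isometry of $(G,d)$.

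Using that $G$ is dense in the complete metric space $(\widehat{G},\widehat{d})$, I would extend $\oplus$ to $\widehat{G}\times\widehat{G}$ and $\ominus$ to $\widehat{G}$ via the standard uniform-continuity extension theorem: for $\hat{x},\hat{y}\in\widehat{G}$, pick $(x_n),(y_n)\subseteq G$ with $x_n\to\hat{x}$, $y_n\to\hat{y}$ in $\widehat{d}$; then $(x_n\oplus y_n)$ is Cauchy in $(G,d)$ by the estimate above, hence converges in $\widehat{G}$, and I set $\hat{x}\oplus\hat{y}$ to be its limit (well-definedness and continuity of the extension are routine, and the extended operations restrict to the originals on $G$). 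I would then verify the gyrogroup axioms: $(G1)$ and $(G2)$ pass to the limit from $e\oplus x = x$ and $\ominus x\oplus x = e$; for $(G3)$ I would define $\widehat{\mathrm{gyr}}[\hat{x},\hat{y}](\hat{z}) := \ominus(\hat{x}\oplus\hat{y})\oplus(\hat{x}\oplus(\hat{y}\oplus\hat{z}))$ in line with Proposition \ref{Pro:gyr}(3) and obtain the desired left associativity with gyration by taking limits of the analogous identity on $G$; $(G4)$ is similar. I expect the main obstacle to be showing that $\widehat{\mathrm{gyr}}[\hat{x},\hat{y}]$ is a groupoid automorphism of $(\widehat{G},\oplus)$: its homomorphism property will follow by extending $\mathrm{gyr}[a,b](x\oplus y) = \mathrm{gyr}[a,b](x)\oplus\mathrm{gyr}[a,b](y)$ from $G$ by continuity, and its bijectivity with the correct inverse will come from extending $\mathrm{gyr}[\hat{y},\hat{x}]$ (Proposition \ref{Pro:gyr}(7)) and applying density.

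Finally, passing the invariance of $d$ through Cauchy-sequence limits gives
\[
\widehat{d}(\hat{a}\oplus\hat{x},\,\hat{a}\oplus\hat{y}) = \lim_n d(a_n\oplus x_n,\,a_n\oplus y_n) = \lim_n d(x_n,y_n) = \widehat{d}(\hat{x},\hat{y}),
\]
and similarly for right translations, so $\widehat{d}$ and hence $M_{\widehat{d}}$ are invariant on $\widehat{G}$. Theorem \ref{The:In} then yields that $(\widehat{G},M_{\widehat{d}},\ast)$ is a fuzzy metric gyrogroup containing $(G,M_d,\ast)$ as a dense fuzzy metric subgyrogroup (the inclusion is a gyrogroup homomorphism by construction, and $G$ is dense since $\tau_{\widehat{d}} = \tau_{M_{\widehat{d}}}$ by fact (1) in the proof of Proposition \ref{PRO}). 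Uniqueness up to fuzzy isometry is exactly Proposition \ref{PRO}.
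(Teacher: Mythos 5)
Your proposal is correct and follows essentially the same route as the paper: uniqueness of the completion via Proposition \ref{PRO}, extension of $\oplus$, $\ominus$ and the gyrations to $\widehat{G}$ by Cauchy sequences/density, verification of ($G1$)--($G4$) by passing the identities of Proposition \ref{Pro:gyr} to the limit, invariance of $\widehat{d}$ by continuity, and Theorem \ref{The:In} to conclude. The only (cosmetic) difference is that you run the uniform-continuity and Cauchy estimates directly in the classical metric $d$, whereas the paper carries them out in the standard fuzzy metric $M_d$; since $d$ and $M_d$ induce the same uniformity, the two are interchangeable, and your estimate $d(x\oplus y, x'\oplus y')\leq d(x,x')+d(y,y')$ is exactly the classical counterpart of the paper's $M_d(a_i\oplus b_i,a_j\oplus b_j,t)\geq M_d(a_i,a_j,\tfrac{t}{2})\ast M_d(b_i,b_j,\tfrac{t}{2})$.
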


\begin{proof}
From Proposition \ref{PRO} it follows that $(\widehat{G},M_{\widehat{d}},\ast)$ is the (up to isometry) unique fuzzy metric completion of the standard fuzzy metric space $(G,M_d,\ast)$. Now we shall prove that $(\widehat{G},M_{\widehat{d}},\ast)$ is a fuzzy metric gyrogroup containing $(G,\oplus)$ as a dense sub gyrogroup and $M_{\widehat{d}}$ is invariant on $\widehat{G}$.

Since $M_d(x,y,t)=\frac{t}{t+d(x,y)}$ for each $x,y\in G$ and $t>0$ and $d$ is invariant on $G$, so is $(M_d,\ast)$ on $G$.

Take any points $a,b\in \widehat{G}$. Consider two sequences $(a_n)_n,(b_n)_n\subseteq G$ such that $\lim\limits_{n\to \infty}a_n=a$ and $\lim\limits_{n\to \infty}b_n=b$ in $(\widehat{G},M_{\widehat{d}},\ast)$. Then we claim that the sequence $(a_n\oplus b_n)_n$ is a Cauchy sequence in $(G,M_d,\ast)$, hence in $(\widehat{G},M_{\widehat{d}},\ast)$. For this, fix $\epsilon\in (0,1)$ and $t>0$. Since $\ast$ is a continuous $t$-norm, there is $s > 0$ such that $(1-s)\ast(1-s)>1-\epsilon$. Observing that $(a_n)_n$ and $(b_n)_n$ are Cauchy sequences in $(G,M_d,\ast)$, thus there is an $n_0\in \omega$ such that $M_d(a_i,a_j,\frac{t}{2})>1-s$ and $M(b_i,b_j,\frac{t}{2})>1-s$ whenever $i,j>n_0$. Since $M_d$ is invariant on $G$, we have that
\begin{align*}
M_d(a_i\oplus b_i,a_j\oplus b_j,t)&\geq M_d(a_i\oplus b_i,a_j\oplus b_i,\frac{t}{2})\ast M_d(a_j\oplus b_i,a_j\oplus b_j,\frac{t}{2})
\\&=M_d(a_i,a_j,\frac{t}{2})\ast M_d( b_i,b_j,\frac{t}{2})
\\&= (1-s)\ast(1-s)
\\&>1-\epsilon
\end{align*}
whenever $i,j>n_0$. Hence we have prove that the sequence $(a_n\oplus b_n)_n$ is a Cauchy sequence in $(\widehat{G},M_{\widehat{d}},\ast)$.

Now we define a binary operation $\widehat{\oplus}$ on $\widehat{G}$ as follows: given two elements $a,b\in \widehat{G}$ and two sequences $(a_n)_n,(b_n)_n\subseteq G$ such that $\lim\limits_{n\to \infty}a_n=a$ and $\lim\limits_{n\to \infty}b_n=b$, $a\widehat{\oplus} b=x$, where
$x$ is the limit of $(a_n\oplus b_n)_n$ in $(\widehat{G},M_{\widehat{d}},\ast)$.

Let us show that $\widehat{\oplus}$ is well defined. Choose two sequences $(c_n)_n,(d_n)_n\subseteq G$ such that $\lim\limits_{n\to \infty}c_n=a$ and $\lim\limits_{n\to \infty}d_n=b$ in $(\widehat{G},M_{\widehat{d}},\ast)$. Then we claim that $\lim\limits_{n\to \infty}(c_n\oplus d_n)=x$ in $(\widehat{G},M_{\widehat{d}},\ast)$.

In fact, take any $\epsilon\in(0,1)$ and $t>0$. Choose $s\in(0,1)$ such that $(1-s)\ast(1-s)\ast(1-s)>1-\epsilon$, then we have that $n_0\in \omega$ such that

\begin{align*}
&M_{\widehat{d}}(x,a_n\oplus b_n, \frac{t}{3})>1-s,
\\& M_{\widehat{d}}(a_n,c_n, \frac{t}{3})>1-s,
\\&M_{\widehat{d}}(b_n,d_n, \frac{t}{3})>1-s
\end{align*}

hold whenever $n>n_0$, since $\lim\limits_{j\to \infty}(a_j\oplus b_j)=x$, $\lim\limits_{j\to \infty}d_j=b=\lim\limits_{j\to \infty}b_j$ and $\lim\limits_{j\to \infty}a_j=a=\lim\limits_{j\to \infty}c_j$ hold in $(\widehat{G},M_{\widehat{d}},\ast)$. Since $M_d$ is invariant on $G$, we have that

\begin{align*}
M_{\widehat{d}}(x, c_n\oplus d_n,t)&\geq M_{\widehat{d}}(x, a_n\oplus b_n,\frac{t}{3})\ast M_{\widehat{d}}(a_n\oplus b_n ,c_n\oplus d_n,\frac{2t}{3})
\\&\geq M_{\widehat{d}}(x, a_n\oplus b_n,\frac{t}{3})\ast M_{\widehat{d}}(a_n\oplus b_n ,c_n\oplus b_n,\frac{t}{3})\ast M_{\widehat{d}}(c_n\oplus b_n ,c_n\oplus d_n,\frac{t}{3})
\\&=M_{\widehat{d}}(x, a_n\oplus b_n,\frac{t}{3})\ast M_{\widehat{d}}(a_n ,c_n,\frac{t}{3})\ast M_{\widehat{d}}( b_n ,d_n,\frac{t}{3})
\\&\geq (1-s)\ast(1-s)\ast(1-s)
\\&>1-\epsilon
\end{align*}
 hold whenever $n>n_0$. Hence we have proved that $\lim\limits_{n\to \infty}(c_n\oplus d_n)=x$ in $(\widehat{G},M_{\widehat{d}},\ast)$. Thus, the binary operation  $\widehat{\oplus}$ is well defined.

We shall prove that $(\ominus a_n)_n$ is a Cauchy sequence in $(\widehat{G},M_{\widehat{d}},\ast)$ for any sequence $( a_n)_n\subseteq G$ such that $\lim\limits_{n\to \infty}a_n=a$ in $(\widehat{G},M_{\widehat{d}},\ast)$. Take any $\epsilon\in (0,1)$ and $t>0$. Since $\lim\limits_{n\to \infty}a_n=a$, there is $n_0\in \omega$ such that $M_{\widehat{d}}(a_i,a_j,t) > 1-\epsilon$ holds whenever $i,j>n_0$. Note that $M_d$ is invariant in $G$, so we have that
 \begin{align*}
 M_{\widehat{d}}(\ominus a_i,\ominus a_j,t)&=M_d(\ominus a_i,\ominus a_j,t)
 \\&=M_d( a_i,a_j,t)
 \\&=M_{\widehat{d}}(a_i,a_j,t)
 \\&> 1-\epsilon
 \end{align*}
 holds whenever $i,j>n_0$.

 Now we can define an operation $\widehat{\ominus}$ on $\widehat{G}$ as follows: given an element $a\in \widehat{G}$ and a sequence $(a_n)_n\subseteq G$ such that $\lim\limits_{n\to \infty}a_n=a$ in $(\widehat{G},M_{\widehat{d}},\ast)$, $\widehat{\ominus} a=x$, where
$x$ is the limit of $(\ominus a_n)_n$ in $(\widehat{G},M_{\widehat{d}},\ast)$.

 Let us show that $\widehat{\ominus}$ is well defined. Choose any sequence $(c_n)_n\subseteq G$ such that $\lim\limits_{n\to \infty}c_n=a$ in $(\widehat{G},M_{\widehat{d}},\ast)$. Then we claim that $\lim\limits_{n\to \infty}(\ominus c_n)=x$ in $(\widehat{G},M_{\widehat{d}},\ast)$. Take any $\epsilon\in(0,1)$ and $t>0$. Then one can find $s\in (0,1)$ such that $(1-s)\ast (1-s)>1-\epsilon$. Since
  $$\lim\limits_{j\to \infty}( c_j)=a=\lim\limits_{j\to \infty}( a_j)$$
  and
  $$\lim\limits_{j\to \infty}( \ominus a_j)=x,$$
 we can find $n_0\in \omega$ such that $M_d(a_n,c_n,\frac{t}{2})=M_{\widehat{d}}(a_n,c_n,\frac{t}{2})>1-s$ and $M_{\widehat{d}}(x,\ominus a_n,\frac{t}{2})>1-s$ holds whenever $n>n_0$. Since that $M_d$ is invariant in $G$, we have that
 \begin{align*}
 M_{\widehat{d}}(x,\ominus c_n,t)&\geq M_{\widehat{d}}(x,\ominus a_n,\frac{t}{2})\ast M_{\widehat{d}}(\ominus a_n,\ominus c_n,\frac{t}{2})
 \\&=M_{\widehat{d}}(x,\ominus a_n,\frac{t}{2})\ast M_d(\ominus a_n,\ominus c_n,\frac{t}{2})
 \\&=M_{\widehat{d}}(x,\ominus a_n,\frac{t}{2})\ast M_d(a_n,c_n,\frac{t}{2})
 \\&>(1-s)\ast (1-s)
 \\&>1-\epsilon
 \end{align*}
 holds whenever $n>n_0$. Hence we have prove that $\widehat{\ominus}$ is well defined.

 Next, let us show that $(\widehat{G}, \widehat{\oplus})$ is a gyrogroup. That is, the operator $\widehat{\oplus}$ satisfies the axioms $G1$-$G4$ in Definition \ref{Def:gyr}.

 Let $e$ be the identity in $G$ and take any $a,b,c$ in $\widehat{G}$ and three sequences $(a_n)_n,(b_n)_n,(c_n)_n\subseteq G$ such that $\lim\limits_{n\to \infty}a_n=a$, $\lim\limits_{n\to \infty}b_n=b$ and $\lim\limits_{n\to \infty}c_n=c$ in $(\widehat{G},M_{\widehat{d}},\ast)$.

 ($G1$): $e\widehat{\oplus} b=b$ holds for each $b\in \widehat{G}$. In fact, since $(G,\oplus)$ is a gyrogroup, we have that
 \begin{align*}
 e \widehat{\oplus} b&=\lim\limits_{n\to \infty}(e_n\oplus b_n)
 \\&=\lim\limits_{n\to \infty} b_n
 \\&=b,
 \end{align*}
 where $e_n=e$ holds for each $n\in \omega$. Similarly, one can easily show that $b\widehat{\oplus} e=b$ holds for each $b\in \widehat{G}$.

 ($G2$): For each $a\in \widehat{G}$, clearly, $$(\widehat{\ominus}a)\widehat{\oplus}a=\lim\limits_{n\to \infty}(\ominus a_n)\widehat{\oplus} \lim\limits_{n\to \infty}a_n=\lim\limits_{n\to \infty}(\ominus a_n\oplus a_n)=e=\lim\limits_{n\to \infty}(a_n\oplus(\ominus a_n))=a\widehat{\oplus}(\widehat{\ominus}a).$$

 ($G3$): Define $\widehat{\text{gyr}}[a,b](c)=\widehat{\ominus}(a\widehat{\oplus}b)\widehat{\oplus}(a\widehat{\oplus}(b\widehat{\oplus}c))$ for each $a,b,c\in\widehat{G}$.

 Firstly, let us verify that the equality ({\it$\clubsuit$}): $a\widehat{\oplus}(b\widehat{\oplus} c)=(a\widehat{\oplus}b)\widehat{\oplus}\widehat{\text{gyr}}[a,b](c)$ holds. In fact,
 \begin{align*}
 (a\widehat{\oplus}b)\widehat{\oplus}\widehat{\text{gyr}}[a,b](c)&=(a\widehat{\oplus}b)\widehat{\oplus} (\widehat{\ominus}(a\widehat{\oplus}b)\widehat{\oplus}(a\widehat{\oplus}(b\widehat{\oplus}c)))
 \\&=(\lim\limits_{n\to\infty}a_n\widehat{\oplus}\lim\limits_{n\to\infty}b_n)\widehat{\oplus} (\widehat{\ominus}(\lim\limits_{n\to\infty}a_n\widehat{\oplus}\lim\limits_{n\to\infty}b_n)\widehat{\oplus}(\lim\limits_{n\to\infty}a_n
 \widehat{\oplus}(\lim\limits_{n\to\infty}b_n\widehat{\oplus}\lim\limits_{n\to\infty}c_n)))
 \\&=\lim\limits_{n\to\infty}((a_n\oplus b_n)\oplus (\ominus(a_n\oplus b_n)\oplus(a_n
 \oplus(b_n\oplus c_n))))
 \\&=\lim\limits_{n\to\infty} ((a_n\oplus b_n)\oplus \text{gyr}[a_n,b_n](c_n))
 \\&=\lim\limits_{n\to\infty}(a_n\oplus (b_n\oplus c_n))
 \\&=\lim\limits_{n\to\infty}a_n\widehat{\oplus} (\lim\limits_{n\to\infty}b_n\widehat{\oplus}\lim\limits_{n\to\infty} c_n)
 \\&=a\widehat{\oplus}(b\widehat{\oplus} c).
 \end{align*}

 Next, we shall prove that $\widehat{\text{gyr}}[a,b]\in \text{Aut~}(\widehat{G},\widehat{\oplus})$. Clearly, the mapping $\widehat{\text{gyr}}[a,b]$ is from $(\widehat{G},\widehat{\oplus})$ to  itself.

 Firstly, we shall prove that $\widehat{\text{gyr}}[a,b]$ is a groupoid homomorphism. In fact, take any $d\in \widehat{G}$ and $(d_n)_n\subseteq G$ such that $\lim\limits_{n\to \infty}d_n=d$. Then we have that
 \begin{align*}
 \widehat{\text{gyr}}[a,b](c\widehat{\oplus}d)&=\widehat{\ominus}(a\widehat{\oplus}b)\widehat{\oplus}(a\widehat{\oplus}(b\widehat{\oplus}(c\widehat{\oplus}d)))
 \\&=\widehat{\ominus}(\lim\limits_{n\to \infty}a_n\widehat{\oplus}\lim\limits_{n\to \infty}b_n)\widehat{\oplus}(\lim\limits_{n\to \infty}a_n\widehat{\oplus}(\lim\limits_{n\to \infty}b_n\widehat{\oplus}(\lim\limits_{n\to \infty}c_n\widehat{\oplus}\lim\limits_{n\to \infty}d_n)))
 \\&=\lim\limits_{n\to \infty}(\ominus(a_n\oplus b_n)\oplus(a_n\oplus(b_n\oplus(c_n\oplus d_n))))
 \\&=\lim\limits_{n\to \infty}(\text{gry}[a_n,b_n](c_n\oplus d_n))
 \\&=\lim\limits_{n\to \infty}(\text{gry}[a_n,b_n](c_n)\oplus \text{gry}[a_n,b_n]( d_n))
 \\&=\lim\limits_{n\to \infty}((\ominus(a_n\oplus b_n)\oplus(a_n\oplus(b_n\oplus c_n)))\oplus(\ominus(a_n\oplus b_n)\oplus(a_n\oplus(b_n\oplus d_n))))
 \\&=(\widehat{\ominus}(\lim\limits_{n\to \infty}a_n\widehat{\oplus}\lim\limits_{n\to \infty} b_n)\widehat{\oplus}(\lim\limits_{n\to \infty}a_n\widehat{\oplus}(\lim\limits_{n\to \infty}b_n\widehat{\oplus}\lim\limits_{n\to \infty} c_n)))
 \\&\widehat{\oplus}(\widehat{\ominus}(\lim\limits_{n\to \infty}a_n\widehat{\oplus} \lim\limits_{n\to \infty}b_n)\widehat{\oplus}(\lim\limits_{n\to \infty}a_n\widehat{\oplus}(\lim\limits_{n\to \infty}b_n\widehat{\oplus} \lim\limits_{n\to \infty}d_n)))
 \\&=(\widehat{\ominus}(a\widehat{\oplus} b)\widehat{\oplus}(a\widehat{\oplus}(b\widehat{\oplus} c)))
 \widehat{\oplus}(\widehat{\ominus}(a\widehat{\oplus} b)\widehat{\oplus}(a\widehat{\oplus}(b\widehat{\oplus} d)))
 \\&=\widehat{\text{gyr}}[a,b](c)~\widehat{\oplus}~\widehat{\text{gyr}}[a,b](d).
 \end{align*}

Secondly, we shall prove that the equalities
 \begin{align}
%&\widehat{\text{gyr}}[a,b](e)=e,
&\widehat{\text{gyr}}[\widehat{\ominus}a,a](b)=b,
\\&\widehat{\ominus}a(\widehat{\oplus}a\widehat{\oplus}b)=b,
%\\&\widehat{\ominus}\widehat{\text{gyr}}[a,b](c)=\widehat{\text{gyr}}[a,b](\widehat{\ominus}c)
 \end{align}
hold for each $a,b\in \widehat{G}$.
%In fact, clearly, the equalities  $$\widehat{\text{gyr}}[a,b](e)=\widehat{\ominus}(a\widehat{\oplus}b)\widehat{\oplus}(a\widehat{\oplus}(b\widehat{\oplus}e))=e$$ hold by ($G1$) and ($G2$) above.

By the definitions of $\widehat{\text{gyr}}[a,b]$, $\widehat{\ominus}$ and $\widehat{\oplus}$ above, we have that

\begin{align*}
\widehat{\text{gyr}}[\widehat{\ominus}a,a](b)&=\widehat{\ominus}(\widehat{\ominus}a\widehat{\oplus}a)\widehat{\oplus}(\widehat{\ominus}a\widehat{\oplus}(a\widehat{\oplus}b))
\\&=\widehat{\ominus}(\lim\limits_{n\to \infty}(\ominus a_n)\widehat{\oplus}\lim\limits_{n\to \infty}a_n)\widehat{\oplus}(\lim\limits_{n\to \infty}(\ominus a_n)\widehat{\oplus}(\lim\limits_{n\to \infty}a_n\widehat{\oplus}\lim\limits_{n\to \infty}b_n))
\\&=\lim\limits_{n\to \infty}(\ominus(\ominus a_n\oplus a_n)\oplus(\ominus a_n\oplus(a_n\oplus b_n)))
\\&=\lim\limits_{n\to \infty}(\text{gyr}[\ominus a_n, a_n](b_n))
\\&=\lim\limits_{n\to \infty}b_n
\\&=b.
\end{align*}

By the equality ({\it$\clubsuit$}), (1) and ($G1$)-($G2$) above, we have that
\begin{align*}
\widehat{\ominus}a(\widehat{\oplus}a\widehat{\oplus}b)&=(\widehat{\ominus}a\widehat{\oplus}a)\widehat{\oplus}\widehat{\text{gyr}}[\widehat{\ominus}a,a](b)
\\&=(\widehat{\ominus}a\widehat{\oplus}a)\widehat{\oplus}b
\\&=e\widehat{\oplus} b
\\&=b.
\end{align*}

%Since $\widehat{\text{gyr}}[a,b]$ is homomorphism and $c\widehat{\oplus}(\widehat{\ominus}c)=e$, by (1) above we have that $$\widehat{\text{gyr}}[a,b](c)\widehat{\oplus}\widehat{\text{gyr}}[a,b](\widehat{\ominus}c)=e.$$
%Hence, by (3) we have proved that
%$$\widehat{\ominus}\widehat{\text{gyr}}[a,b](c)=\widehat{\text{gyr}}[a,b](\widehat{\ominus}c).$$

Now we can prove that $\widehat{\text{gyr}}[a,b]$ is bijective. If $\widehat{\text{gyr}}[a,b](c)=\widehat{\text{gyr}}[a,b](d)$, that is,
$$\widehat{\ominus}(a\widehat{\oplus}b)\widehat{\oplus}(a\widehat{\oplus}(b\widehat{\oplus}c))=\widehat{\ominus}(a\widehat{\oplus}b)
\widehat{\oplus}(a\widehat{\oplus}(b\widehat{\oplus}d)),$$
then by (2) above one can easily obtain that $c=d$. This implies that the mapping $\widehat{\text{gyr}}[a,b]$ is injective.

Next we claim the mapping $\widehat{\text{gyr}}[a,b]$ is onto. Take any $c\in \widehat{G}$. Then
\begin{align*}
&\widehat{\text{gyr}}[a,b](\widehat{\text{gyr}}[b,a](c))
\\&=\widehat{\text{gyr}}[a,b](\widehat{\ominus}(b\widehat{\oplus}a)\widehat{\oplus}(b\widehat{\oplus}(a\widehat{\oplus}c)))
\\&=\widehat{\ominus}(a\widehat{\oplus}b)\widehat{\oplus}(a\widehat{\oplus}(b\widehat{\oplus}(\widehat{\ominus}(b\widehat{\oplus}a)\widehat{\oplus}(b\widehat{\oplus}(a\widehat{\oplus}c)))))
\\&=\widehat{\ominus}(\lim\limits_{n\to \infty}a_n\widehat{\oplus}\lim\limits_{n\to \infty}b_n)\widehat{\oplus}(\lim\limits_{n\to \infty}a_n
\widehat{\oplus}(\lim\limits_{n\to \infty}b_n\widehat{\oplus}(\widehat{\ominus}(\lim\limits_{n\to \infty}b_n\widehat{\oplus}\lim\limits_{n\to \infty}a_n)\widehat{\oplus}(\lim\limits_{n\to \infty}b_n\widehat{\oplus}(\lim\limits_{n\to \infty}a_n\widehat{\oplus}\lim\limits_{n\to \infty}c_n)))))
\\&=\lim\limits_{n\to \infty}(\ominus(a_n\oplus b_n)\oplus(a_n
\oplus(b_n\oplus(\ominus(b_n\oplus a_n)\oplus(b_n\oplus(a_n\oplus c_n))))))
\\&=\lim\limits_{n\to \infty}(\text{gyr}[a_n,b_n](\ominus(b_n\oplus a_n)\oplus(b_n\oplus(a_n\oplus c_n)))  \quad \quad \quad \text{by Proposition \ref{Pro:gyr}~} (3)
\\&=\lim\limits_{n\to \infty}(\text{gyr}[a_n,b_n](\text{gyr}[b_n,a_n](c_n))) \quad \quad \quad \quad \text{by Proposition \ref{Pro:gyr}~} (3)
\\&=\lim\limits_{n\to \infty}c_n\quad \quad \quad \quad \text{by Proposition \ref{Pro:gyr}~} (7)
\\&=c.
\end{align*}
 Hence we have proved that the mapping $\widehat{\text{gyr}}[a,b]$ is onto, furthermore, is bijective.

 ($G4$): Fix $a$ and $b$. For any $c\in \widehat{G}$, we have that
 \begin{align*}
 \widehat{\text{gyr}}[a\widehat{\oplus}b,b](c)&=\widehat{\ominus}((a\widehat{\oplus}b)\widehat{\oplus}b)\widehat{\oplus}((a\widehat{\oplus}b)\widehat{\oplus}(b\widehat{\oplus}c))
 \\&=\widehat{\ominus}((\lim\limits_{n\to\infty}a_n\widehat{\oplus}\lim\limits_{n\to\infty}b_n)\widehat{\oplus}\lim\limits_{n\to\infty}b_n)
 \widehat{\oplus}((\lim\limits_{n\to\infty}a_n\widehat{\oplus}\lim\limits_{n\to\infty}b_n)\widehat{\oplus}(\lim\limits_{n\to\infty}b_n\widehat{\oplus}\lim\limits_{n\to\infty}c_n))
 \\&=\lim\limits_{n\to\infty}(\ominus((a_n\oplus b_n)\oplus b_n)
 \oplus((a_n\oplus b_n)\oplus(b_n\oplus c_n)))
 \\&=\lim\limits_{n\to\infty}(\text{gyr}[a_n\oplus b_n,b_n](c_n))\quad \quad \quad \quad \text{by Proposition \ref{Pro:gyr}~} (3)
 \\&=\lim\limits_{n\to\infty}(\text{gyr}[a_n,b_n](c_n))\quad \quad \quad \quad \quad\quad\text{by Definition \ref{Def:gyr}~} (G4)
 \\&=\lim\limits_{n\to\infty}(\ominus(a_n\oplus b_n)\oplus(a_n\oplus(b_n\oplus c_n)))
 \\&=\widehat{\ominus}(\lim\limits_{n\to\infty}a_n\widehat{\oplus}\lim\limits_{n\to\infty} b_n)\widehat{\oplus}(\lim\limits_{n\to\infty}a_n\widehat{\oplus}(\lim\limits_{n\to\infty}b_n\widehat{\oplus} \lim\limits_{n\to\infty}c_n))
 \\&=\widehat{\ominus}(a\widehat{\oplus} b)\widehat{\oplus}(a\widehat{\oplus}(b\widehat{\oplus} c))
 \\&=\widehat{\text{gyr}}[a,b](c).
 \end{align*}
 Hence, we have proved that $\widehat{\text{gyr}}[a\widehat{\oplus}b,b]=\widehat{\text{gyr}}[a,b]$ holds for each $a,b\in \widehat{G}$.

 Thus we have proved that $(\widehat{G},\widehat{\oplus})$ is a gyrogroup. Next, we shall prove that $(M_{\widehat{d}},\ast)$ is invariant on $\widehat{G}$. Take any $a,b,c\in \widehat{G}$ and $t>0$. Choose three sequences $(a_n)_n$, $(b_n)_n$, $(c_n)_n\subseteq G$ such that $\lim\limits_{n\to\infty}a_n=a$, $\lim\limits_{n\to\infty}b_n=b$ and $\lim\limits_{n\to\infty}c_n=c$ in $(\widehat{G},M_{\widehat{d}},\ast)$. Then we have that
 \begin{align*}
 M_{\widehat{d}}(a\widehat{\oplus} b, a\widehat{\oplus} c,t)&=\frac{t}{t+\widehat{d}(a\widehat{\oplus} b, a\widehat{\oplus}c)}
 \\&=\frac{t}{t+\widehat{d}(\lim\limits_{n\to\infty}a_n\widehat{\oplus} \lim\limits_{n\to\infty}b_n, \lim\limits_{n\to\infty}a_n\widehat{\oplus}\lim\limits_{n\to\infty}c_n)}
 \\&=\frac{t}{t+\widehat{d}(\lim\limits_{n\to\infty}(a_n\oplus b_n), \lim\limits_{n\to\infty}(a_n\oplus c_n))}
 \\&=\frac{t}{t+\lim\limits_{n\to\infty}\widehat{d}(a_n\oplus b_n, a_n\oplus c_n)}\quad \quad\quad\text{the metric~~} \widehat{d} \text{~~is continuous on~} \widehat{G}
 \\&=\frac{t}{t+\lim\limits_{n\to\infty}\widehat{d}( b_n,  c_n)}\quad \quad\quad\quad\quad\quad\quad\text{the metric~~} \widehat{d} \text{~~is invariant on~} G
 \\&=\frac{t}{t+\widehat{d}(\lim\limits_{n\to\infty}b_n,  \lim\limits_{n\to\infty}c_n)}\quad ~\quad\quad\quad\quad\text{the metric~~} \widehat{d} \text{~~is continuous on~} \widehat{G}
 \\&=\frac{t}{t+\widehat{d}(b, c)}
 \\&=M_{\widehat{d}}( b, c,t).
 \end{align*}

 Similarly, one can prove that $$ M_{\widehat{d}}(b\widehat{\oplus} a, c\widehat{\oplus} a,t)= M_{\widehat{d}}(b, c,t).$$ So $(M_{\widehat{d}},\ast)$ is invariant on $\widehat{G}$.

Then from Theorem \ref{The:In} it follows that $\widehat{G}$ with the topology introduced by $M_{\widehat{d}}$ is a topological gyrogroup. Furthermore, $(\widehat{G},M_{\widehat{d}},\ast)$ is a fuzzy metric gyrogroup containing $G$ as a dense subgyrogroup, since one can easily show that $a\widehat{\oplus}b=a\oplus b$ for each $a,b\in G$.

 %we shall show that $(\widehat{G},\widehat{\oplus})$ with the topology introduced by $\widehat{M}$ is a topological gyrogroup. In fact, we have proved that $\widehat{M}$ is invariant under the left gyrotranslations on $\widehat{G}$, so from Proposition \ref{Pro:F} it follows that $(\widehat{M},\ast)$ is a fuzzy gyronorm metric. Since $(\widehat{M},\ast)$ is satisfies the right-gyrotranslation inequality: $\widehat{M}(x\widehat{\oplus}a, y\widehat{\oplus}a,t)\geq \widehat{M}(x, y,t)$ for each $x,y,a\in \widehat{G}$ and $t\geq 0$, from Theorem \ref{Them:F} it follows that $(\widehat{G},\widehat{\oplus})$ with the topology introduced by $\widehat{M}$ is a topological gyrogroup.
\end{proof}
From Theorem \ref{The:comp} it follows:
\begin{corollary}\label{CC}
Every gyrogroup $G$ with an invariant metric $d$ admits an (isometric) unique complete metric space $(\widehat{G},\widehat{d})$ of $(G,d)$ such that $\widehat{G}$ with the topology introduced by $\widehat{d}$ is a topology gyrogroup containing $G$ as a dense subgyrogroup and $\widehat{d}$ is invariant on $\widehat{G}$.
\end{corollary}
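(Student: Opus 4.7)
The plan is to derive this corollary directly from Theorem \ref{The:comp} by translating the fuzzy-metric conclusions back into the language of ordinary metrics. Let $(\widehat{G},\widehat{d})$ denote the classical (up-to-isometry unique) metric completion of $(G,d)$; its existence and uniqueness are standard. Fix any continuous $t$-norm $\ast$ (for instance $\wedge$), and form the standard fuzzy metric space $(\widehat{G},M_{\widehat{d}},\ast)$.

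By Theorem \ref{The:comp}, the binary operation $\oplus$ on $G$ extends to a gyrogroup operation $\widehat{\oplus}$ on $\widehat{G}$ under which $(\widehat{G},M_{\widehat{d}},\ast)$ is a fuzzy metric gyrogroup containing $(G,M_d,\ast)$ as a dense fuzzy metric subgyrogroup, and $M_{\widehat{d}}$ is invariant on $\widehat{G}$. In particular, $\widehat{G}$ endowed with the topology $\tau_{M_{\widehat{d}}}$ induced by $M_{\widehat{d}}$ is a topological gyrogroup. The first step toward the corollary is to observe that $\tau_{M_{\widehat{d}}}=\tau_{\widehat{d}}$, which was noted in the proof of Proposition \ref{PRO}. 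Consequently, $\widehat{G}$ with the topology generated by $\widehat{d}$ is a topological gyrogroup containing $G$ as a dense subgyrogroup.

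Next I would translate the invariance statement. Since $M_{\widehat{d}}(x,y,t)=\frac{t}{t+\widehat{d}(x,y)}$, the invariance of $M_{\widehat{d}}$ on $\widehat{G}$, i.e.\
\[
\frac{t}{t+\widehat{d}(a\widehat{\oplus}x,a\widehat{\oplus}y)}
=\frac{t}{t+\widehat{d}(x,y)}
=\frac{t}{t+\widehat{d}(x\widehat{\oplus}a,y\widehat{\oplus}a)}
\]
for all $t>0$ and $a,x,y\in\widehat{G}$, is equivalent to the equalities $\widehat{d}(a\widehat{\oplus}x,a\widehat{\oplus}y)=\widehat{d}(x,y)=\widehat{d}(x\widehat{\oplus}a,y\widehat{\oplus}a)$. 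Hence $\widehat{d}$ is invariant on $\widehat{G}$.

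Finally, for uniqueness up to isometry, I would appeal to the uniqueness of the classical metric completion of $(G,d)$: if $(\widehat{G}',\widehat{d}')$ is any complete metric space containing $G$ as a dense subset with $\widehat{d}'{\restriction}_{G\times G}=d$, then the identity on $G$ extends uniquely to an isometry $\widehat{G}\to\widehat{G}'$. The gyrogroup operation extending $\oplus$ is then forced by continuity (the sequence construction used in the proof of Theorem \ref{The:comp} does not depend on any choice beyond $G$ being a dense subgyrogroup). The only mildly subtle point is the equivalence between invariance of $M_{\widehat{d}}$ and invariance of $\widehat{d}$, but this is immediate from the monotone form of the standard fuzzy metric, so in truth the corollary is a routine reformulation of Theorem \ref{The:comp}.
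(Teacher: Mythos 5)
Your argument is correct and is exactly the intended derivation: the paper itself gives no proof of Corollary~\ref{CC} beyond the words ``From Theorem~\ref{The:comp} it follows,'' and your unpacking --- equality of the topologies $\tau_{M_{\widehat{d}}}=\tau_{\widehat{d}}$ from Proposition~\ref{PRO}, the equivalence of invariance of $M_{\widehat{d}}$ and of $\widehat{d}$ via the strictly monotone form $M_{\widehat{d}}(x,y,t)=\frac{t}{t+\widehat{d}(x,y)}$, and uniqueness from the classical completion --- is the natural way to make that implication explicit. No gaps.
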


\begin{theorem}\label{The:Left}
Let $(G,M,\ast)$ be a fuzzy metric gyrogroup such that $(M,\ast)$ is invariant under the left (right) gyrotranslation. If $(G,M,\ast)$ is a complete fuzzy metric, then every compatible invariant under the left (right) gyrotranslation fuzzy metric on $G$ is complete.
\end{theorem}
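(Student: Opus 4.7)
The plan is to show that any Cauchy sequence with respect to the second fuzzy metric is already Cauchy in $(M,\ast)$, so completeness transfers via compatibility of topologies. I will treat the left-gyrotranslation case; the right case is symmetric.

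Let $(M',\star)$ be a compatible fuzzy metric on $G$ which is invariant under left gyrotranslations, and let $(x_n)_n$ be a Cauchy sequence in $(G,M',\star)$. The central observation is that left-invariance rewrites pairwise distances as distances from the identity: setting $a=\ominus x_n$ in the invariance equation for $M$ gives
$$M(x_n,x_m,t)=M(\ominus x_n\oplus x_n,\ominus x_n\oplus x_m,t)=M(e,\ominus x_n\oplus x_m,t),$$
and the same identity holds with $M'$ in place of $M$. Consequently, $(x_n)_n$ is $M$-Cauchy if and only if the elements $\ominus x_n\oplus x_m$ eventually lie in every basic $\tau_M$-neighborhood of $e$, uniformly in $n,m$, and analogously for $M'$.

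Now fix $\varepsilon\in(0,1)$ and $t>0$. Since $(M,\ast)$ and $(M',\star)$ are both compatible with the topology of $G$, we have $\tau_M=\tau_{M'}$, so the basic neighborhood $B_M(e,\varepsilon,t)$ of $e$ contains some basic $M'$-neighborhood $B_{M'}(e,\varepsilon',t')$. Using the $M'$-Cauchy condition with parameters $(\varepsilon',t')$ together with the left-invariance identity for $M'$, I obtain $n_0$ such that $\ominus x_n\oplus x_m\in B_{M'}(e,\varepsilon',t')\subseteq B_M(e,\varepsilon,t)$ for all $n,m\geq n_0$. The left-invariance identity for $M$ then yields $M(x_n,x_m,t)>1-\varepsilon$, so $(x_n)_n$ is Cauchy in $(G,M,\ast)$. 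By completeness, it converges to some $x\in G$ in $\tau_M=\tau_{M'}$, hence also in $(G,M',\star)$.

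For the right-gyrotranslation case, one uses that $x_m\oplus\ominus x_m=e$ (the two-sided inverse property noted after Proposition \ref{Pro:gyr}) to obtain the analogous identity $M(x_n,x_m,t)=M(x_n\oplus\ominus x_m,e,t)$, and then repeats the argument with $x_n\oplus\ominus x_m$ in place of $\ominus x_n\oplus x_m$. The one substantive step is matching neighborhoods of the identity across the two fuzzy uniformities; the guiding principle is that one-sided invariance forces the fuzzy uniformity to be controlled by the neighborhood filter of $e$, so two compatible invariant fuzzy metrics must give rise to the same family of Cauchy sequences.
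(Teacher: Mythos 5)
Your proof is correct and follows essentially the same route as the paper's: translate the Cauchy condition to neighborhoods of the identity via one-sided invariance, match basic neighborhoods of $e$ using compatibility of the topologies, and transfer back via invariance of $M$. The only cosmetic difference is your closing remark about fuzzy uniformities, which is motivation rather than a needed step.
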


\begin{proof}
Let $(N,\ast)$ be a compatible invariant under the left (right) gyrotranslation fuzzy metric on $G$. Take a Cauchy sequence $(x_n)_n$ in $(G,N,\ast)$. Then we shall prove that $(x_n)_n$ is a Cauchy sequence in $(G,M,\ast)$. In fact, take any $\varepsilon\in (0,1)$ and $t>0$. Since the topologies on $G$ introduced by $(N,\ast)$ and $(M,\ast)$,respectively, are same, there is $\varepsilon_0\in (0,1)$ and $t_0>0$ such that $B_N(e,\varepsilon_0,t_0)\subseteq B_M(e,\varepsilon,t)$, where $e$ is the identity in $G$ and $B_N(e,\varepsilon_0,t_0)=\{x\in G:N(e,x,t_0)>1-\varepsilon_0\}$, similar to $B_M(e,\varepsilon,t)$. Since $(x_n)_n$ is a Cauchy sequence in $(G,N,\ast)$, for $\varepsilon_0$ and $t_0>0$ there is $j\in \omega$ such that $N(x_i,x_k, t_0)>1-\varepsilon_0$ whenever $i,k>j$. This implies that $N(e,\ominus x_i\oplus x_k, t_0)>1-\varepsilon_0$ ($N(e, x_k\oplus(\ominus x_i), t_0)>1-\varepsilon_0$), since $N$ is invariant under the left (right) gyrotranslation. Hence $\ominus x_i\oplus x_k\in B_N(e,\varepsilon_0,t_0)\subseteq B_M(e,\varepsilon,t)$ ($x_k\oplus(\ominus x_i)\in B_N(e,\varepsilon_0,t_0)\subseteq B_M(e,\varepsilon,t)$) whenever $i,k>j$. Note that $M$ is invariant under the left (right) gyrotranslation on $G$, so $M(x_i,x_k,t)>1-\varepsilon$ whenever $i,k>j$. Thus we have proved that $(x_n)_n$ is a Cauchy sequence in $(G,M,\ast)$, so from the completion of $M$ on $G$ it follows that the sequence $(x_n)_n$ converges in $(G,M,\ast)$. Since the topologies on $G$ introduced by $(N,\ast)$ and $(M,\ast)$,respectively, are same, the sequence $(x_n)_n$ converges in $(G,N,\ast)$. Thus $N$ is a complete fuzzy metric on $G$.
\end{proof}

Applying Theorems \ref{The:comp} and \ref{The:Left} we have the following result:
\begin{corollary}
If $(G,M,\ast)$ is a fuzzy metric gyrogroup such that $(M,\ast)$ is invariant, then every invariant under the left gyrotranslation fuzzy metric on the completion $(\widehat{G},\widehat{M},\ast)$ of $(G,M,\ast)$ is complete.
\end{corollary}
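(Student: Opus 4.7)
The plan is to derive this corollary as a straightforward combination of Theorems \ref{The:comp} and \ref{The:Left}. First, I would invoke Theorem \ref{The:comp} to conclude that the completion $(\widehat{G},\widehat{M},\ast)$ is a fuzzy metric gyrogroup containing $(G,M,\ast)$ as a dense fuzzy metric subgyrogroup, and that the extended fuzzy metric $\widehat{M}$ is invariant on $\widehat{G}$ in the strong sense, i.e.\ invariant under both left and right gyrotranslations. By construction as a completion, $(\widehat{G},\widehat{M},\ast)$ is also a complete fuzzy metric space.

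Next, I would apply Theorem \ref{The:Left} to the completed gyrogroup $(\widehat{G},\widehat{M},\ast)$ in place of $(G,M,\ast)$. All the hypotheses of that theorem are now in hand: $(\widehat{G},\widehat{M},\ast)$ is a fuzzy metric gyrogroup, $\widehat{M}$ is invariant under the left gyrotranslation on $\widehat{G}$, and $(\widehat{G},\widehat{M},\ast)$ is complete. The conclusion of Theorem \ref{The:Left} then reads: every compatible fuzzy metric on $\widehat{G}$ that is invariant under the left gyrotranslation is complete, which is precisely the statement of the corollary.

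The only point that needs verifying is that the hypotheses of Theorem \ref{The:Left} transfer from $(G,M,\ast)$ to the completion $(\widehat{G},\widehat{M},\ast)$; but this is delivered \emph{verbatim} by the concluding ``furthermore'' clause of Theorem \ref{The:comp}, together with the fact that any completion is by definition complete. So the argument does not require any additional work with Cauchy sequences, gyroautomorphism extensions, or the identities of Proposition \ref{Pro:gyr}, and there is no substantial obstacle once the two preceding results are in place. The whole proof therefore reduces to a two-line chaining argument with explicit citations of Theorems \ref{The:comp} and \ref{The:Left}.
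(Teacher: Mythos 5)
Your proposal is correct and is essentially identical to the paper's own justification, which consists of nothing more than the citation ``Applying Theorems \ref{The:comp} and \ref{The:Left} we have the following result.'' The one caveat worth noting --- that Theorem \ref{The:comp} is literally stated only for the standard fuzzy metric $M_d$ of an invariant metric $d$, not for an arbitrary invariant fuzzy metric $(M,\ast)$, so its ``furthermore'' clause is not quite \emph{verbatim} applicable --- is an imprecision shared by the paper itself and does not distinguish your argument from the author's.
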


\end{document}